\newtheorem{theorem}{Theorem}
\newtheorem{lemma}{Lemma}
\newtheorem{definition}{Definition}
\newtheorem{corollary}{Corollary}
\newtheorem*{OW1}{Theorem OW1}
\newtheorem*{OW2}{Theorem OW2}
\newtheorem*{OW3}{Theorem OW3}
\begin{document}
\author{Gvantsa Shavardenidze}
\address{G.Shavardenidze, Institute of Mathematics, Faculty of Exact and
Natural Sciences, Ivane Javakhishvili Tbilisi State University,
Chavcha\-vadze str. 1, Tbilisi 0179, Georgia}
\email{shavardenidzegvantsa@gmail.com}
\address{ }
\email{ }
\title[]{On the convergence of Cesáro means of negative order of
Vilenkin-Fourier series}
\date{}
\maketitle

\begin{abstract}
In 1971 Onnewer and Waterman establish sufficient condition which guarantees
uniform convergence of Vilenkin-Fourier series of continuous function. In
the paper we consider different classes of functions of generalized bounded
oscilation and in the terms of these classes there are established
sufficient conditions for uniform convergence of Cesáro means of negative
order.
\end{abstract}

\footnotetext{%
2010 Mathematics Subject Classification. 42C10.
\par
Key words and phrases: Bounded Vilenkin group, Generalized bounded
variation, Cesáro means, Uniform convergence.}

\section{Definition an Notations}

Let $\mathbb{N}_{+}$ denote the set of positive integers, $\mathbb{N}:=%
\mathbb{N}_{+}\cup \{0\}.$ Let $m:=(m_{0},m_{1},...)$ denote a sequence of
positive integers not less than $2.$ Denote by $Z_{m_{k}}:=\{0,1,...,m_{k}-1%
\}$ the additive group of integers modulo $m_{k}$. Define the group $G_{m}$
as the complete direct product of the groups $Z_{m_{j}},$ with the product
of the discrete topologies of $Z_{m_{j}}$'s. The direct product $\mu $ of
the measures 
\begin{equation*}
\mu _{k}(\{j\}):=\frac{1}{m_{k}}\quad (j\in Z_{m_{k}})
\end{equation*}%
is the Haar measure on $G_{m}$ with $\mu (G_{m})=1.$ If the sequence $m$ is
bounded, then $G_{m}$ is called a bounded Vilenkin group. In this paper we
consider only bounded Vilenkin group. The elements of $G_{m}$ can be
represented by sequences $x:=(x_{0},x_{1},...,x_{j},...)$, $(x_{j}\in
Z_{m_{j}}).$ The group operation $+$ in $G_{m}$ is given by $x+y=\left(
x_{0}+y_{0}\left( \text{mod}m_{0}\right) ,...,x_{k}+y_{k}\left( \text{mod}%
m_{k}\right) ,...\right) $ , where $x=\left( x_{0},...,x_{k},...\right) $
and $y=\left( y_{0},...,y_{k},...\right) \in G_{m}$. The inverse of $+$ will
be denoted by $-$.

It is easy to give a base for the neighborhoods of $G_{m}:$ 
\begin{equation*}
I_{0}(x):=G_{m},
\end{equation*}%
\begin{equation*}
I_{n}(x):=\{y\in G_{m}|y_{0}=x_{0},...,y_{n-1}=x_{n-1}\}
\end{equation*}%
for $x\in G_{m},\ n\in {\mathbb{N}}$. Define $I_{n}:=I_{n}(0)$ for $n\in {%
\mathbb{N}}_{+}$. Set $e_{n}:=\left( 0,...,0,1,0,...\right) \in G_{m}$ the $%
n\,$th\thinspace coordinate of which is 1 and the rest are zeros $\left(
n\in \mathbb{N}\right) .$

If we define the so-called generalized number system based on $m$ in the
following way: $M_{0}:=1,M_{k+1}:=m_{k}M_{k}(k\in {\mathbb{N}}),$ then every 
$n\in {\mathbb{N}}$ can be uniquely expressed as $n=\sum\limits_{j=0}^{%
\infty }n_{j}M_{j},$ where $n_{j}\in Z_{m_{j}}\ (j\in {\mathbb{N}}_{+})$ and
only a finite number of $n_{j}$'s differ from zero. We use the following
notation.

Set $n^{\left( A\right) }=n_{A}M_{A}+\cdots +n_{0}M_{0},$ $n_{A}\neq 0,$ $\
A\in \mathbb{N}.$ \ Suppose that $n^{\left( -1\right) }=0.$

Let $Z_{\beta }^{\left( k\right) }=\left( x_{0},x_{1},\cdots
,x_{k-1},0,0,\cdots \right) ,$ where 
\begin{equation*}
\beta =\sum\limits_{j=1}^{k-1}\left( \frac{x_{j}}{M_{j+1}}\right) M_{k}\text{%
, \ }\left( x_{j}\in Z_{m_{j}}\right) \text{, \ }j=0,1,\ldots ,n-1.
\end{equation*}

It's easy to show that 
\begin{equation}
G_{m}=\bigcup\limits_{\beta =0}^{M_{k}-1}\left( I_{k}+Z_{\beta }^{\left(
k\right) }\right) .  \label{Gm}
\end{equation}

Next, we introduce on $G_{m}$ an orthonormal system which is called the
Vilenkin system \cite{AVDR}. At first define the complex valued functions $%
r_{k}(x):G_{m}\rightarrow {\mathbb{C}}$, the generalized Rademacher
functions in this way 
\begin{equation*}
r_{k}(x):=\exp \frac{2\pi \imath x_{k}}{m_{k}}\ (\imath ^{2}=-1,\ x\in
G_{m},\ k\in \mathbb{N}).
\end{equation*}

\noindent Now define the Vilenkin system $\psi := (\psi_n : n\in{\mathbb{N}}%
) $ on $G_{m}$ as follows. 
\begin{equation*}
\psi_{n}(x):=\prod\limits_{k=0}^{\infty}r_{k}^{n_{k}}(x)\quad (n\in\mathbb{N}%
).
\end{equation*}

\noindent Specifically, we call this system the Walsh-Paley one if $m\equiv
2.$

\noindent The Vilenkin system is orthonormal and complete in $L^{1}(G_{m})$ 
\cite{AVDR}.

\noindent Now, introduce analogues of the usual definitions of the Fourier
analysis. If $f\in L^1(G_m)$ we can establish the following definitions in
the usual way:

\noindent Fourier coefficients: 
\begin{equation*}
\widehat{f}(k):=\int_{G_{m}}f\overline{\psi }_{k}d\mu \qquad (k\in {\mathbb{N%
}}),
\end{equation*}%
partial sums: 
\begin{equation*}
S_{n}f:=\sum_{k=0}^{n-1}\widehat{f}(k)\psi _{k}\qquad (n\in {\mathbb{N}}%
_{+},\,\,S_{0}f:=0),
\end{equation*}%
Fejér means: 
\begin{equation*}
\sigma _{n}f:=\frac{1}{n}\sum_{k=1}^{n}S_{k}f\qquad (n\in {\mathbb{N}}_{+}),
\end{equation*}%
Dirichlet kernels: 
\begin{equation*}
D_{n}:=\sum_{k=0}^{n-1}\psi _{k}\qquad (n\in {\mathbb{N}}_{+}),\text{ \ \ \
\ \ }D_{0}=0,
\end{equation*}%
Fejér kernels: 
\begin{equation*}
K_{n}\left( x\right) :=\frac{1}{n}\sum\limits_{k=1}^{n}D_{k}\left( x\right) .
\end{equation*}

Recall that \cite{SWSP} 
\begin{equation}
D_{M_{n}}\left( x\right) =\left\{ 
\begin{array}{ll}
M_{n}, & \mbox{if }x\in I_{n}, \\ 
0, & \mbox{if }x\in G_{m}\backslash I_{n}.%
\end{array}%
\right.  \label{dir}
\end{equation}

and 
\begin{equation}
\int\limits_{G_{m}}D_{n}\left( t\right) d\mu t=1,n\in N_{+}.  \label{dir1}
\end{equation}

Let $n=n_{k}M_{k}+n^{^{\prime }}$, $0<n_{k}<m_{k}$ and $0\leq n^{^{\prime
}}<M_{k}$, then (see \cite{SWSP}) 
\begin{equation}
D_{n}\left( x\right) =\frac{1-\psi _{M_{k}}^{n_{k}}\left( x\right) }{1-\psi
_{M_{k}}\left( x\right) }D_{M_{k}}\left( x\right) +\psi
_{M_{k}}^{n_{k}}\left( x\right) D_{n^{^{\prime }}}\left( x\right)
\label{dir2}
\end{equation}%
\begin{equation}
D_{j+n_{k}M_{k}}\left( x\right) =D_{n_{k}M_{k}}\left( x\right) +\psi
_{n_{k}M_{k}}\left( x\right) D_{j}\left( x\right) .  \label{dir3}
\end{equation}%
\begin{equation}
D_{j+rM_{k}}\left( x\right) =\left( \sum\limits_{q=0}^{r-1}\psi
_{M_{k}}^{q}\left( x\right) \right) D_{M_{k}}\left( x\right) +\psi
_{M_{k}}^{r}\left( x\right) D_{j}\left( x\right) .  \label{dir4}
\end{equation}%
Let $0\leq j<n_{s}M_{s}$ and $0\leq n_{s}<M_{s}$, then ( see \cite{GatGog}) 
\begin{equation}
D_{n_{s}M_{s}-j}\left( x\right) =D_{n_{s}M_{s}}\left( x\right) +\psi
_{n_{s}M_{s}-1}\left( x\right) \overline{D_{j}\left( x\right) }.
\label{dir5}
\end{equation}

\bigskip Cesáro $\left( C,\alpha \right) $ means of Vilenkin-Fourier series
is defined as follows%
\begin{eqnarray*}
\sigma _{n}^{-\alpha }\left( f;x\right) &=&\frac{1}{A_{n-1}^{-\alpha }}%
\sum\limits_{\nu =0}^{n}A_{n-\nu }^{-\alpha -1}S_{\nu }\left( f;x\right) \\
&=&\frac{1}{A_{n-1}^{-\alpha }}\sum\limits_{\nu =0}^{n-1}A_{n-\nu }^{-\alpha
}\widehat{f}\left( \nu \right) \psi _{\nu }\left( x\right) \\
&=&\int\limits_{G_{m}}f\left( x-t\right) K_{n}^{-\alpha }\left( t\right)
d\mu \left( t\right) .
\end{eqnarray*}%
where%
\begin{equation*}
A_{0}^{\alpha }=0,
\end{equation*}%
\begin{equation*}
A_{n}^{\alpha }=\frac{\left( \alpha +1\right) \cdots \left( \alpha +n\right) 
}{n!},\alpha \neq -1,-2,\ldots ,
\end{equation*}%
\begin{equation*}
K_{n}^{-\alpha }\left( t\right) =\frac{1}{A_{n-1}^{-\alpha }}%
\sum\limits_{\nu =0}^{n-1}A_{n-1}^{-\alpha }\psi _{\nu }\left( t\right) .
\end{equation*}%
It is well known that \cite{Zy}%
\begin{equation}
A_{n}^{\alpha }=\sum\limits_{k=0}^{n-1}A_{n-k}^{\alpha -1},  \label{A1}
\end{equation}%
\begin{equation}
A_{n}^{\alpha }\sim n^{\alpha },  \label{A2}
\end{equation}%
\begin{equation}
A_{n}^{\alpha }-A_{n-1}^{\alpha }=A_{n}^{\alpha -1}.  \label{A3}
\end{equation}

By $C\left( G_{m}\right) $ denote the space of continuous functions on $%
G_{m} $ with the supremum norm%
\begin{equation*}
\left\Vert f\right\Vert _{C}:=\sup_{x\in G_{m}}\left\vert f\left( x\right)
\right\vert ,\ \left( f\in G_{m}\right) .
\end{equation*}%
Let $f\in C\left( G_{m}\right) .$ The modulus of continuity is defined as
follows%
\begin{equation*}
\omega \left( f,\frac{1}{M_{k}}\right) :=\sup_{x\in G_{m}}\sup_{t\in
I_{k}}\left\vert f\left( x-t\right) -f\left( x\right) \right\vert .
\end{equation*}%
Set 
\begin{equation*}
O\left( f,M_{k}\right) =\sum\limits_{\beta =1}^{M_{k}-1}\omega \left(
f,I_{k}+Z_{\beta }^{\left( k\right) }\right) ,
\end{equation*}%
where%
\begin{equation*}
\omega \left( f,I_{k}+Z_{\beta }^{\left( k\right) }\right)
=\sup_{x,x^{\prime }\in I_{k}+Z_{\beta }^{\left( k\right) }}\left\vert
f\left( x\right) -f\left( x^{^{\prime }}\right) \right\vert .
\end{equation*}

\begin{definition}
\cite{SWSP} We say that $f$ is function of Bounded oscilation $\left( f\in
BO\left( G_{m}\right) \right) ,$ if%
\begin{equation*}
\sup_{k}O\left( f,M_{k}\right) <\infty .
\end{equation*}
\end{definition}

For bounded Vilenkin group Onnewer and Waterman \cite{OW} proved that the
following theorems hold true.

\begin{OW1}[Onnewer,Waterman]
\label{theoremOW1} Let $f$ be a continuous functions on $G_{m}$ such that 
\begin{equation*}
\lim_{k\rightarrow \infty }\sum\limits_{\beta =1}^{M_{k}-1}\frac{1}{\beta }%
\left\vert \sum\limits_{j=0}^{m_{k+1}-1}f\left( x-Z_{\beta }^{\left(
k\right) }-jx_{k}\right) e^{\frac{2\pi ijn_{k}}{m_{k+1}}}\right\vert =0
\end{equation*}%
uniformly in $x\in G_{m}$ and $n_{k}\in \left\{ 1,2,\ldots
,m_{k+1}-1\right\} .$ Then the Vilenkin-Fourier series of $f$ converges
uniformly on $G_{m}.$
\end{OW1}

From Theorem \ref{theoremOW1} imply the following theorem.

\begin{OW2}
\label{theoremOW2} \cite{OW} Let $f\in C\left( G_{m}\right) \cap BO\left(
G_{m}\right) .$ Then the Vilenkin-Fourier series of $f$ converges uniformly
on $G_{m}.$
\end{OW2}

Let $p\left( u\right) $ be a continuous, realvalued, strictly increasing
function, defined for $u\geq 0$, such that $p\left( 0\right) =0$ and $%
\lim_{u\rightarrow \infty }p\left( u\right) =\infty .$ Let $q\left( u\right) 
$ be the inverse of $p\left( u\right) .$ Let%
\begin{equation*}
M\left( u\right) =\int\limits_{0}^{u}p\left( t\right) dt\text{ \ \ \ \ and \
\ \ \ \ }N\left( u\right) =\int\limits_{0}^{u}q\left( t\right) dt\text{ .\ }
\end{equation*}%
Functions $M$ and $N$ thus obtained are called complementary in the sense of
Young \cite{Zy}, and they satisfy the following inequality%
\begin{equation*}
\text{if \ }a,b\geq 0,\text{ then }ab\leq M\left( a\right) +N\left( b\right)
.
\end{equation*}

\begin{definition}
A function $f$ on $G_{m\text{ }}$ is generalized bounded M-oscilation \ $%
\left( f\in BO_{M}\left( G_{m}\right) \right) $ if there exists an $K<\infty 
$, such that 
\begin{equation*}
\sup_{k}\sum\limits_{\beta =1}^{M_{k}-1}M\left( \omega \left(
f,I_{k}+Z_{\beta }^{\left( k\right) }\right) \right) <K.
\end{equation*}
\end{definition}

In terms of M-oscilation Onnewer and Waterman \cite{OW} proved that the
following is true.

\begin{OW3}
Let $M$ and $N$ be functions complementary in the sense of Young, $f\in
C\left( G_{m}\right) \cap BO_{M}\left( G_{m}\right) $ and let $%
\sum\limits_{k=1}^{\infty }N\left( k^{-1}\right) <\infty .$ Then the Fourier
series of $f$ converges uniformly on $G_{m}.$
\end{OW3}

\section{Main Results}

\begin{theorem}
\label{theorem1}Let $f\in C\left( G_{m}\right) ,\alpha \in \left( 0,1\right) 
$ and 
\begin{equation*}
\lim_{k\rightarrow \infty }\sum\limits_{\beta =1}^{M_{k}-1}\frac{1}{\beta
^{1-\alpha }}\left\vert f\left( x-z_{\beta }^{\left( k\right) }\right)
-f\left( x-z_{\beta }^{\left( k\right) }-e_{k}\right) \right\vert =0
\end{equation*}

uniformly with respect to $x$ on $G_{m}$. Then%
\begin{equation*}
\lim_{n\rightarrow \infty }\left\Vert \sigma _{n}^{-\alpha }\left( f\right)
-f\right\Vert _{C}=0.
\end{equation*}
\end{theorem}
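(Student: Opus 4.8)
The plan is to pass to the convolution kernel and split it at the scale $M_N$, where $N$ is fixed by $M_N\le n<M_{N+1}$. First I would record that $\int_{G_m}K_n^{-\alpha}\,d\mu=1$ — this follows from $\int_{G_m}D_\nu\,d\mu=1$ (equation (\ref{dir1})), from $D_0=0$, and from the identities (\ref{A1}), (\ref{A3}) together with the paper's convention $A_0^{\alpha}=0$ — so that
\[
\sigma_n^{-\alpha}(f;x)-f(x)=\int_{G_m}\bigl(f(x-t)-f(x)\bigr)K_n^{-\alpha}(t)\,d\mu(t),
\]
and it is enough to bound the right-hand side uniformly in $x$ and let $N\to\infty$. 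Using (\ref{Gm}) I would write this as $\int_{I_N}(\cdots)+\sum_{\beta=1}^{M_N-1}\int_{I_N+Z_\beta^{(N)}}(\cdots)$, where the $Z_\beta^{(N)}$ are the coset representatives of (\ref{Gm}) (written $z_\beta^{(N)}$ in the statement).

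For the local part, $|f(x-t)-f(x)|\le\omega(f,M_N^{-1})$ for $t\in I_N$, so it suffices to prove $\int_{I_N}|K_n^{-\alpha}|\,d\mu\le C$ uniformly in $n$. The point is that for $t\in I_N$ and $k<M_{N+1}$ the character $\psi_k(t)$ equals $r_N^{\lfloor k/M_N\rfloor}(t)$, hence depends only on the coordinate $t_N$; writing $A_{n-1}^{-\alpha}K_n^{-\alpha}=\sum_{k=0}^{n-1}A_{n-1-k}^{-\alpha}\psi_k$ and grouping the indices $k$ by their $N$-th digit, $K_n^{-\alpha}$ on $I_N$ becomes a trigonometric polynomial in $e^{2\pi i t_N/m_N}$ of degree $<m_N$ whose coefficients are, by (\ref{A3}), blocks $A_b^{1-\alpha}-A_a^{1-\alpha}$ of length $M_N$, hence of size $O(n^{1-\alpha})$ by (\ref{A2}). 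Since $m_N$ is bounded and $A_{n-1}^{-\alpha}\sim n^{-\alpha}$, this gives $\|K_n^{-\alpha}\|_{L^\infty(I_N)}\le Cn$ and therefore $\int_{I_N}|K_n^{-\alpha}|\,d\mu\le Cn/M_N\le C$, so the local part is $\le C\,\omega(f,M_N^{-1})\to 0$ by continuity of $f$.

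The heart of the matter is the non-local sum. On the coset $I_N+Z_\beta^{(N)}$ write $t=Z_\beta^{(N)}+s$ with $s\in I_N$; since $Z_\beta^{(N)}$ is supported on the coordinates $<N$ and $s$ on those $\ge N$, one has the factorisation $\psi_k(Z_\beta^{(N)}+s)=\psi_{k\bmod M_N}(Z_\beta^{(N)})\,r_N^{\lfloor k/M_N\rfloor}(s)$. Grouping indices by their $N$-th digit and applying Abel summation to the inner sums over the coefficients $A_{\bullet}^{-\alpha}$, the boundary term carries the factor $D_{M_N}(Z_\beta^{(N)})$, which vanishes because $Z_\beta^{(N)}\notin I_N$ for $\beta\ge1$; by (\ref{A3}) what survives is a combination of $\sum_j A_{\bullet}^{-\alpha-1}D_j(Z_\beta^{(N)})$ tested against the translates $r_N^{q}(s)$, plus — from the tail digit $n'$ in $n=n_NM_N+n'$ — one term of the shape $\frac{A_{n'-1}^{-\alpha}}{A_{n-1}^{-\alpha}}\,\psi_{n_NM_N}(t)K_{n'}^{-\alpha}(t)$ that recurses to the smaller scale $n'<M_N$. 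For the main (non-recursive) part I would use $|A_m^{-\alpha-1}|\asymp m^{-\alpha-1}$, the Dirichlet estimates coming from (\ref{dir})--(\ref{dir3}) (namely $D_j(Z_\beta^{(N)})=j$ for $j\le M_{A(\beta)}$ and $|D_j(Z_\beta^{(N)})|\le C M_{A(\beta)}$ in general, $A(\beta)$ being the position of the first nonzero coordinate of $Z_\beta^{(N)}$, with $M_{A(\beta)}$ comparable to $M_N/\beta$), the normalisation $\mu(I_N+Z_\beta^{(N)})=1/M_N$, and one further summation by parts in $s$ converting $\int_{I_N}(f(x-Z_\beta^{(N)}-s)-f(x))r_N^q(s)\,d\mu(s)$ into first differences of $f$ in the $N$-th coordinate at the base point $Z_\beta^{(N)}$, modulo $O(\omega(f,M_{N+1}^{-1}))$; collecting these yields a bound by a fixed multiple of the hypothesis sums $\sum_\beta\beta^{\alpha-1}|f(x-z_\beta^{(k)})-f(x-z_\beta^{(k)}-e_k)|$ at the relevant levels. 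The recursive term has to be handled by keeping the modulated kernel $\psi_{n_NM_N}K_{n'}^{-\alpha}$ together and noting that integrating $f(x-\,\cdot\,)$ against it is $\sigma_{n'}^{-\alpha}$ of a $\psi_{n_NM_N}$-twisted local average of $f$ — a function that is $I_N$-smooth, of sup-norm $O(\omega(f,M_N^{-1}))$, and whose difference sums at levels $<N$ are controlled by those of $f$ — so that an induction on $n$ can close the loop.

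The step I expect to be the main obstacle is precisely this non-local estimate, for two reasons. First, extracting the exact weight $\beta^{\alpha-1}$ forces one to write the Cesàro blocks through $A^{1-\alpha}$ and to sum over the dyadic ranges $M_N/M_{A(\beta)+1}\le\beta<M_N/M_{A(\beta)}$. Second, and more delicately, the first differences that the summations by parts produce are a priori generic consecutive differences $f(x-Z_\gamma^{(N+1)})-f(x-Z_{\gamma-1}^{(N+1)})$ together with an ``anchor'' term $f(x-Z_\beta^{(N)})-f(x)$, and reconciling these with the specific shifts $z_\beta^{(k)},\ z_\beta^{(k)}+e_k$ of the hypothesis requires a careful bookkeeping that passes between consecutive levels via the identity $z_\beta^{(k)}+t\,e_k=z_{t+m_k\beta}^{(k+1)}$ and exploits that $m=(m_0,m_1,\dots)$ is bounded. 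Once the whole expression is dominated by a fixed multiple of the hypothesis sums at levels $k\le N$ plus terms that vanish by continuity, the proof is completed by a standard splitting of the levels: the finitely many low levels are negligible as $N\to\infty$ — for each fixed $k$ the sum $\sum_{\beta=1}^{M_k-1}\beta^{\alpha-1}|f(x-z_\beta^{(k)})-f(x-z_\beta^{(k)}-e_k)|$ is at most $C M_k^{\alpha}\|f\|_C$ and is attached to a multiplier tending to $0$ — while the high levels are small directly by the hypothesis.
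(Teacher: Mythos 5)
Your overall strategy --- represent $\sigma_n^{-\alpha}f-f$ by the kernel, decompose $G_m$ into the cosets $I_N+Z_{\beta}^{(N)}$ of (\ref{Gm}), bound the kernel at the representatives $Z_{\beta}^{(N)}$ by $c\,\beta^{\alpha-1}M_N$, and convert each coset integral into the first differences appearing in the hypothesis --- is the skeleton of the paper's proof (its Lemma \ref{Lemma3} is your coset bound; its terms $IV_2$ and $III_{212}$ are your non-local sums). Your local estimate $\int_{I_N}\left\vert K_n^{-\alpha}\right\vert d\mu\leq C$ is correct and would replace the paper's appeal to Lemma \ref{tsitsi}. However, the two steps you yourself single out as the crux are, as written, not merely delicate but insufficient.

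First, the coset kernel bound. A direct Abel summation of $\sum_{\nu=0}^{n-1}A_{n-1-\nu}^{-\alpha}\psi_{\nu}$ produces, via (\ref{A3}), coefficients $A_{n-1-j}^{-\alpha-1}\asymp (n-j)^{-\alpha-1}$, singular near $j=n$; pairing these with $\left\vert D_j(Z_{\beta}^{(N)})\right\vert\leq CM_{A(\beta)}$ gives only $\left\vert K_n^{-\alpha}(Z_{\beta}^{(N)})\right\vert\lesssim n^{\alpha}M_{A(\beta)}\sim M_N^{1+\alpha}/\beta$. The resulting per-coset weight $M_N^{\alpha}/\beta$ exceeds the hypothesis weight $\beta^{\alpha-1}$ by the unbounded factor $(M_N/\beta)^{\alpha}$, so the hypothesis does not absorb it. To reach $\beta^{\alpha-1}M_N$ one must first reflect each block by (\ref{dir5}), $D_{n_kM_k-j}=D_{n_kM_k}+\psi_{n_kM_k-1}\overline{D_j}$, so that the coefficient of $\overline{D_j}$ becomes $A_{n^{(k-1)}+j}^{-\alpha-1}\asymp j^{-\alpha-1}$, singular exactly where $\left\vert D_j(Z_{\beta}^{(N)})\right\vert\leq\min(j,M_{A(\beta)})$ is small; this re-indexing is the entire content of the paper's Lemmas \ref{Lemma1} and \ref{Lemma2} and is missing from your list of ingredients ((\ref{dir})--(\ref{dir3}) alone do not yield it). Second, your induction on the tail $n'$ in $n=n_NM_N+n'$ does not close: that term carries the prefactor $A_{n'-1}^{-\alpha}/A_{n-1}^{-\alpha}\sim (n/n')^{\alpha}$, which is unbounded (take $n'$ of order $1$), so neither ``$\sigma_{n'}^{-\alpha}(g)-g=o(1)$ by induction'' nor ``$\Vert g\Vert_{\infty}=O(\omega(f,M_N^{-1}))$ times $\Vert K_{n'}^{-\alpha}\Vert_{1}=O((n')^{\alpha})$'' gives $o(1)$. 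The paper does not recurse: it estimates this block (its term $IV$) directly by the same coset/first-difference argument, the point being that Lemma \ref{Lemma3} carries the factor $1/A_{n'-1}^{-\alpha}$ which cancels the dangerous prefactor, and it disposes of all frequencies below $M_{k-1}$ in one stroke via Lemma \ref{tsitsi}. Your mechanism for producing the first differences (summation by parts in the $N$th coordinate) differs from the paper's --- which multiplies each term by $\psi_{M_k}^{-n_k}(e_k)$ and uses $\left\vert 1-\psi_{M_k}^{-n_k}(e_k)\right\vert\geq 2\sin(\pi/m)>0$ from the boundedness of $m$ --- and is viable, but only after the two gaps above are repaired.
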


Set

\begin{equation*}
\nu \left( M_{k},f\right) =\sum\limits_{\beta =0}^{M_{k}-1}\omega \left(
f,I_{k}+Z_{\beta }^{\left( k\right) }\right) .
\end{equation*}

\begin{theorem}
\label{theorem2} Let $f\in C\left( G_{m}\right) ,\alpha \in \left(
0,1\right) .$ If%
\begin{equation*}
\sum\limits_{k=1}^{\infty }\frac{\nu \left( M_{k},f\right) }{M_{k}^{1-\alpha
}}<\infty ,
\end{equation*}%
then%
\begin{equation*}
\lim_{n\rightarrow \infty }\left\Vert \sigma _{n}^{-\alpha }\left( f\right)
-f\right\Vert _{C}=0.
\end{equation*}
\end{theorem}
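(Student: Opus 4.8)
The plan is to pass to the kernel representation and reduce everything to local estimates of $K_n^{-\alpha}$. Since $\sigma_n^{-\alpha}(f;x)=\int_{G_m}f(x-t)K_n^{-\alpha}(t)\,d\mu(t)$ and $\int_{G_m}K_n^{-\alpha}\,d\mu=1$, we have
\[
\sigma_n^{-\alpha}(f;x)-f(x)=\int_{G_m}\bigl(f(x-t)-f(x)\bigr)K_n^{-\alpha}(t)\,d\mu(t),
\]
so it suffices to estimate this integral uniformly in $x$. One might hope to deduce the assertion from Theorem \ref{theorem1}, but the natural bound $|f(x-Z_\beta^{(k)})-f(x-Z_\beta^{(k)}-e_k)|\le\omega(f,I_k+Z_\gamma^{(k)})$ (with $I_k+Z_\gamma^{(k)}$ the coset containing $x-Z_\beta^{(k)}$), followed by a rearrangement of the decreasing weights $\beta^{-(1-\alpha)}$, only controls the sum of Theorem \ref{theorem1} by a constant multiple of $\nu(M_k,f)$, which need not tend to $0$; so I would argue directly. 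Fix $n$ with $M_N\le n<M_{N+1}$ and split $G_m=I_N\cup\bigcup_{k=0}^{N-1}(I_k\setminus I_{k+1})$, which writes the integral as $J_N(x)+\sum_{k=0}^{N-1}J_k(x)$ with $J_N(x)=\int_{I_N}(f(x-t)-f(x))K_n^{-\alpha}(t)\,d\mu(t)$ and $J_k(x)=\int_{I_k\setminus I_{k+1}}(f(x-t)-f(x))K_n^{-\alpha}(t)\,d\mu(t)$.

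The term $J_N$ is easy: on $I_N$ every $\psi_\nu$ with $\nu<M_N$ equals $1$ and every $\psi_\nu$ with $M_N\le\nu<M_{N+1}$ equals $r_N^{\,\nu_N}$, so $K_n^{-\alpha}$ restricted to $I_N$ is a trigonometric polynomial in $r_N$ of degree at most $m_N-1$ whose coefficients are nonnegative and sum to $\bigl(\sum_{\nu=0}^{n-1}A_{n-\nu}^{-\alpha}\bigr)/A_{n-1}^{-\alpha}=A_n^{1-\alpha}/A_{n-1}^{-\alpha}\asymp n\asymp M_N$ by \eqref{A1}--\eqref{A2}. Hence $\|K_n^{-\alpha}\|_{L^\infty(I_N)}\lesssim M_N$, so $\int_{I_N}|K_n^{-\alpha}|\,d\mu\lesssim M_N\,\mu(I_N)\lesssim 1$, and therefore $|J_N(x)|\le\omega(f,1/M_N)\int_{I_N}|K_n^{-\alpha}|\,d\mu\lesssim\omega(f,1/M_N)$, which tends to $0$ uniformly in $x$ by the uniform continuity of $f$.

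The substantial part is the estimation of $J_k(x)$ for $k<N$. Here the crude bound $|f(x-t)-f(x)|\cdot|K_n^{-\alpha}(t)|$ is useless, because $\|K_n^{-\alpha}\|_1\to\infty$ (negative-order Cesàro means are not uniformly bounded on $C(G_m)$), so cancellation must be exploited. The plan is: on $I_k\setminus I_{k+1}$, expand $K_n^{-\alpha}$ by iterating the Dirichlet-kernel identities \eqref{dir2}--\eqref{dir5}, so that $D_{M_k}=M_k\mathbf 1_{I_k}$ confines the relevant part of the integral to $I_k$ and the remaining factors become $\psi$-characters weighted by the Cesàro numbers; rewrite $\int f(x-t)(\cdots)\,d\mu(t)$ by means of the coset decomposition \eqref{Gm} at level $k$, turning it into a sum over the $M_k$ cosets $I_k+Z_\beta^{(k)}$ of terms each controlled by the oscillation of $f$ over a single coset; and perform an Abel summation in the Cesàro numbers, using \eqref{A1}--\eqref{A3}. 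The point of the coset reorganisation is that, after the cancellation, these contributions are summed over all $M_k$ cosets irrespective of the location of $x$, so the resulting bound features $\nu(M_k,f)=\sum_{\beta}\omega(f,I_k+Z_\beta^{(k)})$ and not the much larger $\omega(f,1/M_k)$, while the decay of the coefficients $A^{-\alpha-1}_{\cdot}$ (where $\alpha\in(0,1)$ enters decisively) yields a gain comparable to $M_k/M_N$. The expected outcome is an estimate of the form
\[
|J_k(x)|\le C\Bigl(\tfrac{M_k}{M_N}\Bigr)^{\!\gamma}\frac{\nu(M_k,f)}{M_k^{1-\alpha}}\qquad(0\le k<N),
\]
uniformly in $x\in G_m$, for some $\gamma=\gamma(\alpha)>0$; proving this — in particular, carrying out the summation by parts so that $\nu(M_k,f)$ rather than the worst-coset modulus governs $J_k$, while at the same time extracting the factor $(M_k/M_N)^\gamma$ — is the main obstacle.

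Granting this estimate, the theorem follows by a routine splitting. Given $\varepsilon>0$, the hypothesis lets us choose $N_0$ with $\sum_{k>N_0}\nu(M_k,f)/M_k^{1-\alpha}<\varepsilon$. For $N_0<k<N$ we have $(M_k/M_N)^\gamma\le1$, so $\sum_{N_0<k<N}|J_k(x)|\le C\sum_{k>N_0}\nu(M_k,f)/M_k^{1-\alpha}<C\varepsilon$; and for $k\le N_0$, since $M_k/M_N\le 2^{-(N-k)}\le 2^{-(N-N_0)}$, we get $\sum_{k\le N_0}|J_k(x)|\le C\,2^{-\gamma(N-N_0)}\sum_{k\le N_0}\nu(M_k,f)/M_k^{1-\alpha}\to 0$ as $N\to\infty$. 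Hence $\sup_{x}\sum_{k=0}^{N-1}|J_k(x)|\to 0$, which together with $\sup_x|J_N(x)|\lesssim\omega(f,1/M_N)\to 0$ gives $\|\sigma_n^{-\alpha}(f)-f\|_C\to 0$.
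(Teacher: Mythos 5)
Your proposal does not contain a proof: the entire difficulty is concentrated in the displayed estimate $|J_k(x)|\le C\left(M_k/M_N\right)^{\gamma}\nu\left(M_k,f\right)/M_k^{1-\alpha}$, which you state as a goal and explicitly acknowledge you have not established (``proving this \dots is the main obstacle''). Everything before it (the bound for $J_N$) and after it (the $\varepsilon$-splitting) is routine, so the theorem is exactly as hard as the missing estimate. It is moreover doubtful in the form you state it: on $I_k\setminus I_{k+1}$ the kernel bound of Lemma \ref{Lemma2} only gives $\int_{I_k\setminus I_{k+1}}\left\vert K_n^{-\alpha}\right\vert d\mu\le c\left(\alpha\right)\left(n/M_k\right)^{\alpha}$, which grows with $n$, so the necessary cancellation must come from the translation-by-$e_k$ trick (multiplying by $\psi_{M_k}^{-n_k}\left(e_k\right)$ and using \eqref{truk1}) exactly as in the proof of Theorem \ref{theorem1}; you give no indication of how a geometric gain $\left(M_k/M_N\right)^{\gamma}$ would be extracted on top of that, and the paper never needs such a gain.

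More importantly, the route you dismiss at the outset is the one that works, and it is how the paper proves the theorem: Theorem \ref{theorem2} is obtained by verifying the hypothesis of Theorem \ref{theorem1}. Your objection --- that bounding each term by a coset oscillation only yields $C\nu\left(M_k,f\right)$, which need not tend to $0$ --- uses the weights $\beta^{-\left(1-\alpha\right)}$ too crudely. Split the sum into blocks $M_r\le\beta<M_{r+1}$, $r=0,\dots,k-1$. For the $r$-th block there are two competing bounds: each summand is at most $\omega\left(f,1/M_k\right)$ (since $e_k\in I_k$), so the block contributes at most $c\,M_r^{\alpha}\omega\left(f,1/M_k\right)$; and the summands of the $r$-th block are controlled by oscillations of $f$ over cosets at level $r$, so the block contributes at most $\nu\left(M_r,f\right)/M_r^{1-\alpha}$ --- crucially with index $r$, not $k$, which is what connects to the hypothesis. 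Choosing $\gamma\left(k\right)\rightarrow\infty$ so slowly that $M_{\gamma\left(k\right)}^{\alpha}\omega\left(f,1/M_k\right)\rightarrow 0$, and using the first bound for $r\le\gamma\left(k\right)$ and the second for $r>\gamma\left(k\right)$, the whole sum is at most $c\,M_{\gamma\left(k\right)}^{\alpha}\omega\left(f,1/M_k\right)+\sum_{r\ge\gamma\left(k\right)}\nu\left(M_r,f\right)/M_r^{1-\alpha}$, and the second term is a tail of the convergent series in the hypothesis, hence tends to $0$ uniformly in $x$. This yields the hypothesis of Theorem \ref{theorem1} and the conclusion follows. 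Your proposal, by contrast, restarts the kernel analysis from scratch and stops exactly where the work begins.
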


From this theorem we get following result.

\begin{corollary}
\label{shedegi1}Let $f\in C\left( G_{m}\right) \cap BO_{M}\left(
G_{m}\right) ,\alpha \in \left( 0,1\right) $ and%
\begin{equation*}
\sum\limits_{k=1}^{\infty }M_{k}^{\alpha }M^{-1}\left( \frac{1}{M_{k}}%
\right) <\infty ,
\end{equation*}%
then%
\begin{equation*}
\lim_{n\rightarrow \infty }\left\Vert \sigma _{n}^{-\alpha }\left( f\right)
-f\right\Vert _{C}=0.
\end{equation*}
\end{corollary}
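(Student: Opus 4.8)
The plan is to deduce Corollary~\ref{shedegi1} directly from Theorem~\ref{theorem2}: under the stated hypotheses I will verify that $\sum_{k=1}^{\infty}\nu(M_k,f)/M_k^{1-\alpha}<\infty$, and then Theorem~\ref{theorem2} gives $\|\sigma_n^{-\alpha}(f)-f\|_C\to 0$ at once. Throughout write $\omega_{k,\beta}:=\omega\bigl(f,I_k+Z_\beta^{(k)}\bigr)$ and set the cut-off level $c_k:=M^{-1}(1/M_k)$. This is well defined and strictly positive: since $M(u)=\int_0^u p(t)\,dt$ with $p$ continuous, strictly increasing and $p(0)=0$, the function $M$ is a continuous strictly increasing bijection of $[0,\infty)$ onto itself, so $c_k>0$ and $c_k\to 0$.

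The main point is an elementary pointwise comparison of $\omega_{k,\beta}$ with $M(\omega_{k,\beta})$. Because $M'=p$ is increasing, $M$ is convex with $M(0)=0$, hence $u\mapsto M(u)/u$ is nondecreasing on $(0,\infty)$. From this I will show that for all $k$ and all $\beta$,
\[
\omega_{k,\beta}\ \le\ c_k+M_k c_k\,M(\omega_{k,\beta}).
\]
Indeed, if $\omega_{k,\beta}\le c_k$ the first term alone suffices; if $\omega_{k,\beta}>c_k$, then monotonicity of $M(u)/u$ and $M(c_k)=1/M_k$ give $M(\omega_{k,\beta})/\omega_{k,\beta}\ge M(c_k)/c_k=1/(M_k c_k)$, i.e.\ $\omega_{k,\beta}\le M_k c_k\,M(\omega_{k,\beta})$. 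Summing over $\beta=1,\dots,M_k-1$ and using $f\in BO_M(G_m)$, i.e.\ $\sum_{\beta=1}^{M_k-1}M(\omega_{k,\beta})<K$ uniformly in $k$, yields $\sum_{\beta=1}^{M_k-1}\omega_{k,\beta}\le (M_k-1)c_k+M_k c_k K\le (1+K)M_k c_k$.

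It remains to add the $\beta=0$ term. Since $Z_0^{(k)}=0$, $\omega_{k,0}=\omega(f,I_k)\le 2\|f\|_C$, and therefore
\[
\frac{\nu(M_k,f)}{M_k^{1-\alpha}}\ \le\ \frac{2\|f\|_C}{M_k^{1-\alpha}}+(1+K)\,M_k^{\alpha}c_k .
\]
Now $\sum_k M_k^{\alpha-1}<\infty$ because $m_j\ge 2$ forces $M_k\ge 2^k$ while $\alpha<1$ makes the right side a convergent geometric series, and $\sum_k M_k^{\alpha}c_k=\sum_k M_k^{\alpha}M^{-1}(1/M_k)<\infty$ is precisely the hypothesis of the corollary. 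Hence $\sum_k\nu(M_k,f)/M_k^{1-\alpha}<\infty$ and Theorem~\ref{theorem2} applies, completing the proof.

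The only genuinely nonroutine step is the convexity estimate above — equivalently, the monotonicity of $M(u)/u$ together with the correct choice of the cut-off $c_k=M^{-1}(1/M_k)$; everything else is bookkeeping. The one point to be careful about is that the definition of $BO_M(G_m)$ controls only the sum over $\beta\ge 1$, so the $\beta=0$ term of $\nu(M_k,f)$ must be estimated separately, as done above via the crude bound $2\|f\|_C$ and the summability of $M_k^{\alpha-1}$.
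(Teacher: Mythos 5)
Your proof is correct, and it follows the same overall route as the paper: reduce to Theorem~\ref{theorem2} by showing that $f\in BO_{M}(G_{m})$ together with $\sum_{k}M_{k}^{\alpha}M^{-1}(1/M_{k})<\infty$ forces $\sum_{k}\nu(M_{k},f)/M_{k}^{1-\alpha}<\infty$, the key input being convexity of $M$. The implementation differs in two respects. The paper applies Jensen's inequality to the average $\frac{1}{M_{k}}\sum_{\beta}\omega(f,I_{k}+Z_{\beta}^{(k)})$ and then splits into the cases $O_{M}<1$ and $O_{M}>1$ (using $M(\lambda u)\le\lambda M(u)$ in the second case) to land on $\nu(M_{k},f)\le c\,M_{k}M^{-1}(1/M_{k})$; you instead use the monotonicity of $u\mapsto M(u)/u$ with a per-term cut-off at $c_{k}=M^{-1}(1/M_{k})$, which gives the same bound $\sum_{\beta\ge1}\omega_{k,\beta}\le(1+K)M_{k}c_{k}$ in one stroke and avoids the dichotomy on the size of $O_{M}$. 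Second, you correctly note that the definition of $BO_{M}$ only controls the sum over $\beta\ge1$ while $\nu(M_{k},f)$ starts at $\beta=0$, and you dispose of the $\beta=0$ term via $\omega_{k,0}\le2\|f\|_{C}$ and $\sum_{k}M_{k}^{\alpha-1}<\infty$; the paper passes over this point silently. Your version is, if anything, slightly more careful than the original.
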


\begin{corollary}
\label{shedegi2}\bigskip Let $f\in C\left( G_{m}\right) \cap BO_{p}\left(
G_{m}\right) ,0<\alpha <\frac{1}{p},$ then%
\begin{equation*}
\lim_{n\rightarrow \infty }\left\Vert \sigma _{n}^{-\alpha }\left( f\right)
-f\right\Vert _{C}=0.
\end{equation*}
\end{corollary}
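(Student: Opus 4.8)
The corollary will follow directly from Theorem \ref{theorem2} (and, for $p>1$, it is also the instance $M(u)=u^{p}$ of Corollary \ref{shedegi1}). Recall that $f\in BO_{p}(G_{m})$ means there is a constant $K<\infty$ with $\sum_{\beta=1}^{M_{k}-1}\big(\omega(f,I_{k}+Z_{\beta}^{(k)})\big)^{p}\leq K$ for every $k$; for $p=1$ this is exactly the class $BO(G_{m})$. The plan is: (i) estimate $\nu(M_{k},f)$ in terms of this quantity by H\"older's inequality; (ii) substitute into the hypothesis of Theorem \ref{theorem2}; (iii) use that, for a bounded Vilenkin group, $M_{k}$ grows at least geometrically, so that the resulting series converges precisely because $\alpha<1/p$.

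For step (i), I would first separate the term $\beta=0$: since $Z_{0}^{(k)}=0$ we have $I_{k}+Z_{0}^{(k)}=I_{k}$, and $I_{k}$ being a subgroup gives $\omega(f,I_{k})\leq\omega(f,1/M_{k})$, so
\begin{equation*}
\nu(M_{k},f)\leq\omega\Big(f,\tfrac{1}{M_{k}}\Big)+\sum_{\beta=1}^{M_{k}-1}\omega\big(f,I_{k}+Z_{\beta}^{(k)}\big).
\end{equation*}
To the last sum apply H\"older's inequality with exponents $p$ and $p/(p-1)$ (for $p=1$ the bound below is just the definition of $BO_{1}$):
\begin{equation*}
\sum_{\beta=1}^{M_{k}-1}\omega\big(f,I_{k}+Z_{\beta}^{(k)}\big)\leq(M_{k}-1)^{1-\frac1p}\Big(\sum_{\beta=1}^{M_{k}-1}\big(\omega(f,I_{k}+Z_{\beta}^{(k)})\big)^{p}\Big)^{\frac1p}\leq K^{1/p}\,M_{k}^{\,1-\frac1p}.
\end{equation*}
Dividing by $M_{k}^{1-\alpha}$ gives $\nu(M_{k},f)\,M_{k}^{\alpha-1}\leq\omega(f,1/M_{k})\,M_{k}^{\alpha-1}+K^{1/p}M_{k}^{\alpha-1/p}$, which is step (ii).

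For step (iii): since every $m_{j}\geq2$, the generalized number system satisfies $M_{k}=\prod_{j=0}^{k-1}m_{j}\geq2^{k}$. As $0<\alpha<1/p\leq1$ (in particular $\alpha\in(0,1)$, so Theorem \ref{theorem2} is applicable), both exponents $\alpha-1$ and $\alpha-1/p$ are negative, so $M_{k}^{\alpha-1}\leq2^{k(\alpha-1)}$ and $M_{k}^{\alpha-1/p}\leq2^{k(\alpha-1/p)}$; together with $\omega(f,1/M_{k})\leq2\|f\|_{C}$ this bounds $\sum_{k=1}^{\infty}\nu(M_{k},f)/M_{k}^{1-\alpha}$ above by two convergent geometric series. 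Hence $\sum_{k=1}^{\infty}\nu(M_{k},f)/M_{k}^{1-\alpha}<\infty$, and Theorem \ref{theorem2} yields $\|\sigma_{n}^{-\alpha}(f)-f\|_{C}\to0$. I do not expect any genuine difficulty here: the only points needing care are the choice of the conjugate exponent in the H\"older step (so that the power of $M_{k}$ comes out as $M_{k}^{\alpha-1/p}$) and the use of boundedness of the group in the elementary form $M_{k}\geq2^{k}$ — which is exactly what turns the hypothesis $\alpha<1/p$ into summability of the series required by Theorem \ref{theorem2}.
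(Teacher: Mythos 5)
Your proof is correct and is essentially the paper's argument: the paper simply takes $M(u)=u^{p}$ in Corollary \ref{shedegi1} (whose Jensen-inequality step for $M(u)=u^{p}$ is exactly your H\"older bound $\nu(M_{k},f)\leq c\,M_{k}^{1-1/p}$) and then feeds $\sum_{k}M_{k}^{\alpha}M^{-1}(1/M_{k})=\sum_{k}M_{k}^{\alpha-1/p}<\infty$ into Theorem \ref{theorem2}. Your direct route through Theorem \ref{theorem2} just spells out the same computation more carefully, including the summability via $M_{k}\geq 2^{k}$ that the paper leaves implicit (note the paper's displayed exponent $1/M_{k}^{\alpha-1/p}$ is a sign slip for $M_{k}^{\alpha-1/p}$).
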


We note that the problems of summability of Cesáro means of negative order
of Fourier series with respect to trigonometric, Walsh-Paley and
Walsh-Kaczmarz systems were studied by Zygmund \cite{Zy}, Zhizhiashvili \cite%
{Zhi}, Nagy \cite{nagy1}-\cite{nagy3}, Tevzadze \cite{Tev}, Goginava \cite%
{gog} - \cite{gog1}.

\section{Auxiliary Results}

\begin{lemma}
\label{Lemma1}Let $\alpha \in \left( 0,1\right) $ and $M_{A}\leq n<M_{A+1}.$
Then%
\begin{eqnarray*}
\sum\limits_{j=1}^{n}A_{n-j}^{-\alpha -1}D_{j}\left( x\right)
&=&\sum\limits_{k=0}^{A}\left( \prod\limits_{l=k+1}^{A}\psi
_{n_{l}M_{l}}\left( x\right) \right) D_{n_{k}M_{k}}\left( x\right)
A_{n^{\left( k\right) }-1}^{-\alpha } \\
&&-\sum\limits_{k=0}^{A}\left( \prod\limits_{l=k+1}^{A}\psi
_{n_{l}M_{l}}\left( x\right) \right) \sum\limits_{j=0}^{n_{k}M_{k}-1}\psi
_{n_{k}M_{k}-1}\left( x\right) A_{n^{\left( k-1\right) }+j}^{-\alpha -1}%
\overline{D_{j}\left( x\right) }.
\end{eqnarray*}
\end{lemma}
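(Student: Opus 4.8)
The plan is to expand the sum $\sum_{j=1}^{n} A_{n-j}^{-\alpha-1} D_j(x)$ by peeling off one digit of $n$ at a time, using the Vilenkin-specific Dirichlet kernel identities (\ref{dir3}) and (\ref{dir5}) to separate the "top digit" contribution from a remainder that has exactly the same shape but with $A$ replaced by $A-1$. Write $n = \sum_{k=0}^{A} n_k M_k$ with $n_A \neq 0$, so $n = n_A M_A + n^{(A-1)}$. First I would split the range of summation at $j = n_A M_A$: for $j$ in the bottom block $1 \le j \le n_A M_A - 1$ (more precisely I will want to run $j$ from $0$ to $n_A M_A - 1$, absorbing the $D_0 = 0$ term harmlessly) I would apply the reflection identity (\ref{dir5}) in the form $D_{n_A M_A - j}(x) = D_{n_A M_A}(x) + \psi_{n_A M_A - 1}(x)\overline{D_j(x)}$, after reindexing $j \mapsto n_A M_A - j$; for $j$ in the top block $n_A M_A \le j \le n$ I would write $j = n_A M_A + i$ with $0 \le i \le n^{(A-1)}$ and use the additivity identity (\ref{dir3}), $D_{n_A M_A + i}(x) = D_{n_A M_A}(x) + \psi_{n_A M_A}(x) D_i(x)$.

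After this first split, the coefficients reorganize via the binomial-coefficient identity (\ref{A1}), $A_n^{\alpha} = \sum_{k=0}^{n-1} A_{n-k}^{\alpha-1}$ (applied with $\alpha$ in place of $\alpha$, i.e. $\sum_{j} A_{\cdot}^{-\alpha-1}$ telescoping to $A_{\cdot}^{-\alpha}$): the terms carrying $D_{n_A M_A}(x)$ collect a full sum of $A^{-\alpha-1}_{\cdot}$'s, which by (\ref{A1}) becomes $A_{n^{(A-1)}-1}^{-\alpha}$ times $D_{n_A M_A}(x)$ — this is the $k=A$ term of the first sum in the claimed formula, with the empty product $\prod_{l=A+1}^{A} = 1$. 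The reflection block produces exactly the $k=A$ term of the second sum, $\sum_{j=0}^{n_A M_A - 1}\psi_{n_A M_A - 1}(x)A_{n^{(A-1)}+j}^{-\alpha-1}\overline{D_j(x)}$ (here one has to be careful that the reindexing turns $A_{n-j}^{-\alpha-1}$ with $j$ near $n_A M_A - j$ into $A^{-\alpha-1}_{n^{(A-1)} + j}$; this is where the shifted index $n^{(A-1)} = n^{(A)-1 \text{ digits}}$ enters and must be tracked). What remains is $\psi_{n_A M_A}(x)\sum_{i=0}^{n^{(A-1)}} A_{n^{(A-1)} - i}^{-\alpha-1} D_i(x)$, which is precisely the original expression with $n$ replaced by $n^{(A-1)}$ and an extra factor $\psi_{n_A M_A}(x)$ out front.

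From here I would finish by induction on $A$: the base case $A = 0$ (so $M_0 \le n < M_1$, $n = n_0$) is a direct computation of $\sum_{j=1}^{n_0} A_{n_0 - j}^{-\alpha-1} D_j$ using only (\ref{dir5}) and (\ref{A1}), matching the right-hand side which then has a single $k=0$ term. The inductive step is exactly the reduction just described: applying the induction hypothesis to $\sum_{i=0}^{n^{(A-1)}} A_{n^{(A-1)}-i}^{-\alpha-1}D_i(x)$ gives two sums over $k = 0,\dots, A-1$ with products $\prod_{l=k+1}^{A-1}\psi_{n_l M_l}(x)$; multiplying through by the extracted $\psi_{n_A M_A}(x)$ promotes each product to $\prod_{l=k+1}^{A}\psi_{n_l M_l}(x)$, and adjoining the $k=A$ terms found above yields the full sums $\sum_{k=0}^{A}$ as stated.

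The main obstacle I anticipate is the bookkeeping of indices in the reflection step: one must reindex the bottom block so that $A_{n-j}^{-\alpha-1}$ is rewritten correctly as $A_{n^{(A-1)}+j}^{-\alpha-1}$ (shifting by $n^{(A-1)}$, which is $n$ minus its leading term), confirm that the phase factor is $\psi_{n_A M_A - 1}(x)$ and not $\psi_{n_A M_A}(x)$, and verify that the $j = 0$ endpoint (where $D_0 = 0$) is consistently included or excluded on both sides so that the telescoping of $A_{\cdot}^{-\alpha-1}$ via (\ref{A1}) produces exactly $A_{n^{(A-1)}-1}^{-\alpha}$ rather than an off-by-one variant. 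Once the $A=0$ case and the single-digit reduction are pinned down precisely, the rest is a routine induction.
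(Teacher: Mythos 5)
Your plan is essentially the paper's own proof: the same split at $j=n_AM_A$, the same use of (\ref{dir3}) on the top block and of the reflection identity (\ref{dir5}) on the bottom block, the same telescoping of the $A^{-\alpha-1}_{\cdot}$ coefficients via (\ref{A1}), and the same iteration over the digits of $n$ (the paper first iterates to produce all blocks and then reflects each one, while you reflect at each inductive step --- an immaterial reordering). One bookkeeping correction for the write-up: the $k=A$ coefficient of the first sum is $A_{n^{(A)}-1}^{-\alpha}=A_{n-1}^{-\alpha}$, obtained by adding the top-block contribution $A_{n^{(A-1)}-1}^{-\alpha}$ to the extra $\bigl(A_{n^{(A)}-1}^{-\alpha}-A_{n^{(A-1)}-1}^{-\alpha}\bigr)D_{n_AM_A}$ that the reflection step also produces (since (\ref{dir5}) yields a $D_{n_AM_A}$ term alongside the $\overline{D_j}$ term), not $A_{n^{(A-1)}-1}^{-\alpha}$ alone as you wrote.
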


\begin{proof}
Let $n=n_{A}M_{A}+n^{^{\prime }},$ $0\leq n^{^{\prime }}<M_{A}.$Using (\ref%
{dir3}) and (\ref{A1}) we can write 
\begin{eqnarray*}
\sum\limits_{j=1}^{n}A_{n-j}^{-\alpha -1}D_{j}\left( x\right)
&=&\sum\limits_{j=1}^{n_{A}M_{A}}A_{n-j}^{-\alpha -1}D_{j}\left( x\right)
+\sum\limits_{j=n_{A}M_{A}+1}^{n}A_{n-j}^{-\alpha -1}D_{j}\left( x\right) \\
&=&\sum\limits_{j=1}^{n_{A}M_{A}}A_{n-j}^{-\alpha -1}D_{j}\left( x\right)
+\sum\limits_{j=1}^{n^{\left( A-1\right) }}A_{n^{\left( A-1\right)
}-j}^{-\alpha -1}D_{n_{A}M_{A}+j}\left( x\right) \\
&=&\sum\limits_{j=1}^{n_{A}M_{A}}A_{n-j}^{-\alpha -1}D_{j}\left( x\right)
+D_{n_{A}M_{A}}\left( x\right) A_{n^{\left( A-1\right) }-1}^{-\alpha } \\
&&+\psi _{n_{A}M_{A}}\left( x\right) \sum\limits_{j=1}^{n^{\left( A-1\right)
}}A_{n^{\left( A-1\right) }-j}^{-\alpha -1}D_{j}\left( x\right) .
\end{eqnarray*}%
Using iteration, we get%
\begin{eqnarray}
\sum\limits_{j=1}^{n}A_{n-j}^{-\alpha -1}D_{j}\left( x\right)
&=&\sum\limits_{k=0}^{A}\left( \prod\limits_{l=k+1}^{A}\psi
_{n_{l}M_{l}}\left( x\right) \right)
\sum\limits_{j=1}^{n_{k}M_{k}}A_{n^{\left( k\right) }-j}^{-\alpha
-1}D_{j}\left( x\right)  \label{L1} \\
&&+\sum\limits_{k=0}^{A}\left( \prod\limits_{l=k+1}^{A}\psi
_{n_{l}M_{l}}\left( x\right) \right) D_{n_{k}M_{k}}\left( x\right)
A_{n^{\left( k-1\right) }-1}^{-\alpha }.  \notag
\end{eqnarray}

From (\ref{dir5}) we have%
\begin{eqnarray}
&&\sum\limits_{k=0}^{A}\left( \prod\limits_{l=k+1}^{A}\psi
_{n_{l}M_{l}}\left( x\right) \right)
\sum\limits_{j=1}^{n_{k}M_{k}}A_{n^{\left( k\right) }-j}^{-\alpha
-1}D_{j}\left( x\right)  \label{22} \\
&=&\sum\limits_{k=0}^{A}\left( \prod\limits_{l=k+1}^{A}\psi
_{n_{l}M_{l}}\left( x\right) \right)
\sum\limits_{j=0}^{n_{k}M_{k}-1}A_{n^{\left( k-1\right) }+j}^{-\alpha
-1}D_{n_{k}M_{k}-j}\left( x\right)  \notag \\
&=&\sum\limits_{k=0}^{A}\left( \prod\limits_{l=k+1}^{A}\psi
_{n_{l}M_{l}}\left( x\right) \right) D_{n_{k}M_{k}}\left( x\right)
\sum\limits_{j=0}^{n_{k}M_{k}-1}A_{n^{\left( k-1\right) }+j}^{-\alpha -1} 
\notag \\
&&-\sum\limits_{k=0}^{A}\left( \prod\limits_{l=k+1}^{A}\psi
_{n_{l}M_{l}}\left( x\right) \right) \psi _{n_{k}M_{k}-1}\left( x\right)
\sum\limits_{j=0}^{n_{k}M_{k}-1}A_{n^{\left( k-1\right) }+j}^{-\alpha -1}%
\overline{D_{j}\left( x\right) }.  \notag
\end{eqnarray}

Since 
\begin{equation}
\sum\limits_{j=0}^{n_{k}M_{k}-1}A_{n^{\left( k-1\right) }+j}^{-\alpha
-1}=A_{n^{\left( k\right) }-1}^{-\alpha }-A_{n^{\left( k-1\right)
}-1}^{-\alpha }  \label{L3}
\end{equation}

From (\ref{L1})- (\ref{L3}) we obtain%
\begin{equation*}
\sum\limits_{j=1}^{n}A_{n-j}^{-\alpha -1}D_{j}\left( x\right)
=\sum\limits_{k=1}^{A}\left( \prod\limits_{l=k+1}^{A}\psi
_{n_{l}M_{l}}\left( x\right) \right) D_{n_{k}M_{k}}\left( x\right) \left(
A_{n^{\left( k\right) }-1}^{-\alpha }-A_{n^{\left( k-1\right) }-1}^{-\alpha
}\right)
\end{equation*}%
\begin{equation*}
+\left( \prod\limits_{l=1}^{A}\psi _{n_{l}M_{l}}\left( x\right) \right)
D_{n_{0}M_{0}}\left( x\right) \sum\limits_{j=0}^{n_{0}M_{0}-1}A_{n^{\left(
k-1\right) }+j}^{-\alpha -1}
\end{equation*}%
\begin{equation*}
-\sum\limits_{k=0}^{A}\left( \prod\limits_{l=k+1}^{A}\psi
_{n_{l}M_{l}}\left( x\right) \right) \psi _{n_{k}M_{k}-1}\left( x\right)
\sum\limits_{j=0}^{n_{k}M_{k}-1}A_{n^{\left( k-1\right) }+j}^{-\alpha -1}%
\overline{D_{j}\left( x\right) }
\end{equation*}%
\begin{equation*}
+\sum\limits_{k=0}^{A}\left( \prod\limits_{l=k+1}^{A}\psi
_{n_{l}M_{l}}\left( x\right) \right) D_{n_{k}M_{k}}\left( x\right)
A_{n^{\left( k-1\right) }-1}^{-\alpha }
\end{equation*}%
\begin{equation*}
=\sum\limits_{k=1}^{A}\left( \prod\limits_{l=k+1}^{A}\psi
_{n_{l}M_{l}}\left( x\right) \right) D_{n_{k}M_{k}}\left( x\right)
A_{n^{\left( k\right) }-1}^{-\alpha }
\end{equation*}%
\begin{equation*}
-\sum\limits_{k=0}^{A}\left( \prod\limits_{l=k+1}^{A}\psi
_{n_{l}M_{l}}\left( x\right) \right) \psi _{n_{k}M_{k}-1}\left( x\right)
\sum\limits_{j=0}^{n_{k}M_{k}-1}A_{n^{\left( k-1\right) }+j}^{-\alpha -1}%
\overline{D_{j}\left( x\right) }.
\end{equation*}

Lemma \ref{Lemma1} is proved.
\end{proof}

\begin{lemma}
\label{Lemma2}Let $\alpha \in \left( 0,1\right) $ and $M_{A}\leq n<M_{A+1}$.
Then%
\begin{equation*}
\left\vert K_{n}^{-\alpha }\left( x\right) \right\vert \leq \frac{c\left(
\alpha \right) }{A_{n-1}^{-\alpha }}\sum\limits_{l=0}^{A}M_{l}^{-\alpha
}D_{M_{l}}\left( x\right) .
\end{equation*}
\end{lemma}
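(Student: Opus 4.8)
The plan is to start from the kernel representation implicit in Lemma~\ref{Lemma1}. Writing $K_n^{-\alpha}(x)=\frac{1}{A_{n-1}^{-\alpha}}\sum_{j=1}^{n}A_{n-j}^{-\alpha-1}D_j(x)$ (up to the harmless $j=0$ term), Lemma~\ref{Lemma1} gives
\begin{equation*}
A_{n-1}^{-\alpha}K_n^{-\alpha}(x)=\sum_{k=1}^{A}\Bigl(\prod_{l=k+1}^{A}\psi_{n_lM_l}(x)\Bigr)D_{n_kM_k}(x)A_{n^{(k)}-1}^{-\alpha}-\sum_{k=0}^{A}\Bigl(\prod_{l=k+1}^{A}\psi_{n_lM_l}(x)\Bigr)\psi_{n_kM_k-1}(x)\sum_{j=0}^{n_kM_k-1}A_{n^{(k-1)}+j}^{-\alpha-1}\overline{D_j(x)}.
\end{equation*}
The products of $\psi$'s have modulus $1$, so they can be discarded after applying the triangle inequality. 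For the first sum I would use $|D_{n_kM_k}(x)|\le n_k D_{M_k}(x)$ (from~(\ref{dir2}) or a direct telescoping of Dirichlet kernels over the $m_k$-th roots of unity), together with $A_{n^{(k)}-1}^{-\alpha}\sim (n^{(k)})^{-\alpha}\le c\,(n_kM_k)^{-\alpha}$ by~(\ref{A2}), since $n_kM_k\le n^{(k)}<m_kn_kM_k$ and $m_k$ is bounded. This yields a bound of the form $c(\alpha)\sum_{k=0}^{A}n_k^{1-\alpha}M_k^{-\alpha}D_{M_k}(x)$, and since $n_k<m_k$ is bounded, $n_k^{1-\alpha}\le c$, giving exactly the claimed majorant for the first piece.

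The second sum is the main obstacle, because it involves $\overline{D_j(x)}$ for \emph{all} $j$ from $0$ to $n_kM_k-1$, not just Dirichlet kernels at indices $M_l$. The key estimate I expect to need is a pointwise bound of the type
\begin{equation*}
\sum_{j=0}^{rM_k-1}A_{q+j}^{-\alpha-1}\,\overline{D_j(x)}
\end{equation*}
controlled, in modulus, by $c(\alpha)\sum_{l\le k}(\text{something})^{-\alpha}D_{M_l}(x)$. To get this I would group the range $0\le j<n_kM_k$ according to the leading nonzero digit of $j$: for $j$ with $M_l\le j<M_{l+1}$, use the recursion~(\ref{dir3})–(\ref{dir4}) to peel off $D_{j_lM_l}(x)$, bounded by $j_l D_{M_l}(x)\le c\,D_{M_l}(x)$, and iterate downward on the lower-order digits of $j$; meanwhile the coefficients $A_{q+j}^{-\alpha-1}$ are (for $\alpha\in(0,1)$) negative with $|A_{q+j}^{-\alpha-1}|=A_{q+j}^{-\alpha}-A_{q+j+1}^{-\alpha}$ summing telescopically, so that $\sum_{j:\,M_l\le j<M_{l+1}}|A_{q+j}^{-\alpha-1}|\le A_{q+M_l}^{-\alpha}\le c\,M_l^{-\alpha}$ (using $q\le n^{(k-1)}<M_k$ and boundedness of $m$). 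Combining the digit-peeling with this telescoped coefficient sum produces $\sum_{l=0}^{k}c(\alpha)M_l^{-\alpha}D_{M_l}(x)$ for each inner block, and summing over $k=0,\dots,A$ only changes the constant (each $M_l^{-\alpha}D_{M_l}$ is picked up boundedly many times, or one absorbs the geometric factor $M_l^{-\alpha}$).

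Finally I would collect both pieces, divide through by $A_{n-1}^{-\alpha}$, and observe that all the accumulated numerical factors depend only on $\alpha$ and on $\sup_k m_k$, which is finite for a bounded Vilenkin group; renaming the constant $c(\alpha)$ (absorbing the $m$-dependence, as is standard in this setting) gives
\begin{equation*}
\bigl|K_n^{-\alpha}(x)\bigr|\le\frac{c(\alpha)}{A_{n-1}^{-\alpha}}\sum_{l=0}^{A}M_l^{-\alpha}D_{M_l}(x),
\end{equation*}
which is the assertion of Lemma~\ref{Lemma2}. The delicate point throughout is keeping the estimate \emph{pointwise in $x$} rather than only in $L^1$: this is why one must expand every $D_j$ via the digitwise recursions down to kernels $D_{M_l}$, whose pointwise behaviour is explicitly known from~(\ref{dir}), instead of using any integral identity such as~(\ref{dir1}).
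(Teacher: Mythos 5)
Your reduction to Lemma \ref{Lemma1}, the removal of the unimodular factors, and your estimate of the first sum (via $|D_{n_kM_k}(x)|\le n_kD_{M_k}(x)$ and $A_{n^{(k)}-1}^{-\alpha}\le c(n_kM_k)^{-\alpha}$) all match the paper's estimate of the term it calls $B_1$. The gap is in the second sum, $\sum_{k=0}^{A}\sum_{j=0}^{n_kM_k-1}\vert A_{n^{(k-1)}+j}^{-\alpha-1}\vert\,\vert D_j(x)\vert$. For fixed $k$ you bound the coefficient mass of each block $M_l\le j<M_{l+1}$ by $c\,M_l^{-\alpha}$, explicitly discarding the shift $q=n^{(k-1)}$, and pair it with $\vert D_j(x)\vert\le c\sum_{s\le l}D_{M_s}(x)$; after the swap this yields the per-$k$ estimate $c\sum_{s\le k}M_s^{-\alpha}D_{M_s}(x)$. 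The subsequent claim that summing over $k=0,\dots,A$ ``only changes the constant'' is false: the term $M_s^{-\alpha}D_{M_s}(x)$ is collected once for every $k\ge s$ with $n_k\ne 0$, so for an $n$ with many nonzero digits (e.g.\ $n=M_A+M_{A-1}+\cdots+M_0$) your bound is $c\sum_{s=0}^{A}(A+1-s)M_s^{-\alpha}D_{M_s}(x)$, which exceeds the claimed majorant by a factor of order $A\sim\log n$. There is no geometric factor to absorb: for fixed $s$ the summand does not decay in $k$ once the $q$-dependence has been thrown away.

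The repair — which is exactly what the paper's splitting of $B_{22}$ into $B_{22}^{\prime}$ and $B_{22}^{\prime\prime}$ accomplishes — is to keep the shift $n^{(k-1)}$ inside the coefficient. Let $t=t(k)$ be the position of the leading nonzero digit of $n^{(k-1)}$, so $M_t\le n^{(k-1)}<M_{t+1}$. Then for every $j<M_{t+1}$ one has $\vert A_{n^{(k-1)}+j}^{-\alpha-1}\vert\le c\,M_t^{-\alpha-1}$, so the entire low range contributes at most $c\,M_{t+1}M_t^{-\alpha-1}\sum_{l\le t}D_{M_l}(x)\le c\,M_t^{-\alpha}\sum_{l\le t}D_{M_l}(x)$; for $j\in[M_q,M_{q+1})$ with $q>t$ the coefficient is $\le c\,M_q^{-\alpha-1}$ and the blocks sum geometrically. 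Since $k\mapsto t(k)$ is injective on $\{k:n_k\ne 0\}$ (each nonzero digit points to the previous one), the quantities $M_{t(k)}^{-\alpha}\sum_{l\le t(k)}D_{M_l}(x)$ add up over $k$ to at most $c\sum_{l=0}^{A}M_l^{-\alpha}D_{M_l}(x)$ with no logarithmic loss. As written, your argument establishes only the weaker inequality carrying the extra factor $A+1$, which does not suffice for Lemma \ref{Lemma2}.
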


\begin{proof}
\bigskip From Lemma \ref{Lemma1} we can write%
\begin{equation*}
\left\vert \sum\limits_{j=1}^{n}A_{n-j}^{-\alpha -1}D_{j}\left( x\right)
\right\vert \leq \sum\limits_{k=0}^{A}D_{n_{k}M_{k}}\left( x\right)
A_{n^{\left( k\right) }-1}^{-\alpha }
\end{equation*}%
\begin{equation*}
+\sum\limits_{k=0}^{A}\sum\limits_{j=0}^{n_{k}M_{k}-1}\left\vert
A_{n^{\left( k-1\right) }+j}^{-\alpha -1}\right\vert \left\vert D_{j}\left(
x\right) \right\vert
\end{equation*}%
\begin{equation*}
:=B_{1}+B_{2}
\end{equation*}%
From (\ref{dir5}) and (\ref{A2}) we have 
\begin{equation}
B_{1}\leq c\left( \alpha \right) \sum\limits_{k=0}^{A}M_{k}^{-\alpha
}D_{M_{k}}\left( x\right) ,\text{ }n_{k}\neq 0.  \label{B1}
\end{equation}%
For $B_{2}$ we can apply (\ref{dir4})%
\begin{eqnarray*}
B_{2}
&=&\sum\limits_{k=0}^{A}\sum\limits_{r=0}^{n_{k}-1}\sum%
\limits_{j=0}^{M_{k}-1}\left\vert A_{n^{\left( k-1\right)
}+j+rM_{k}}^{-\alpha -1}\right\vert \left\vert D_{j+rM_{k}}\left( x\right)
\right\vert \\
&\leq
&\sum\limits_{k=0}^{A}\sum\limits_{r=0}^{n_{k}-1}\sum\limits_{j=0}^{M_{k}-1}%
\left\vert A_{n^{\left( k-1\right) }+j+rM_{k}}^{-\alpha -1}\right\vert
D_{M_{k}}\left( x\right) \\
&&+\sum\limits_{k=0}^{A}\sum\limits_{r=0}^{n_{k}-1}\sum%
\limits_{j=0}^{M_{k}-1}\left\vert A_{n^{\left( k-1\right)
}+j+rM_{k}}^{-\alpha -1}\right\vert \left\vert D_{j}\left( x\right)
\right\vert \\
&=&B_{21}+B_{22}.
\end{eqnarray*}%
When $r\neq 0$, from (\ref{A2}) we have 
\begin{equation*}
A_{n^{\left( k-1\right) }+j+rM_{k}}^{-\alpha -1}\sim M_{k}^{-\alpha -1},
\end{equation*}%
then

\begin{equation}
B_{21}\leq \sum\limits_{k=0}^{A}\sum\limits_{j=0}^{M_{k}-1}M_{k}^{-\alpha
-1}D_{M_{k}}\left( x\right) =\sum\limits_{k=0}^{A}M_{k}^{-\alpha
}D_{M_{k}}\left( x\right) .  \label{B21}
\end{equation}%
It is known that \cite{SWSP}

\begin{equation}
D_{n}\left( x\right) =\psi _{n}\left( x\right) \sum\limits_{j=0}^{\infty
}D_{M_{j}}\left( x\right) \sum\limits_{a=m_{j}-n_{j}}^{m_{j}-1}r_{j}^{a}.
\label{Dn}
\end{equation}%
Suppose that in $n^{\left( k-1\right) }$ $\ n_{k-1}=n_{k-2}=\cdots
=n_{t+1}=0 $ and $n_{t}\neq 0.$ Then%
\begin{eqnarray*}
B_{22} &\leq
&\sum\limits_{k=0}^{A}\sum\limits_{q=0}^{t}\sum\limits_{j=M_{q}}^{M_{q+1}-1}%
\left\vert A_{n^{\left( k-1\right) }+j}^{-\alpha -1}\right\vert \left\vert
D_{j}\left( x\right) \right\vert \\
&&+\sum\limits_{k=0}^{A}\sum\limits_{q=t+1}^{k-1}\sum%
\limits_{j=M_{q}}^{M_{q+1}-1}\left\vert A_{n^{\left( k-1\right)
}+j}^{-\alpha -1}\right\vert \left\vert D_{j}\left( x\right) \right\vert \\
&=&B_{22}^{^{\prime }}+B_{22}^{^{\prime \prime }}.
\end{eqnarray*}

We can write%
\begin{equation*}
\sum\limits_{q=0}^{t}\sum\limits_{j=M_{q}}^{M_{q+1}-1}\left\vert
A_{n^{\left( k-1\right) }+j}^{-\alpha -1}\right\vert \left\vert D_{j}\left(
x\right) \right\vert \leq
\sum\limits_{q=0}^{t}\sum\limits_{j=M_{q}}^{M_{q+1}-1}M_{t}^{-\alpha
-1}\sum\limits_{l=0}^{q}D_{M_{l}}\left( x\right)
\end{equation*}%
\begin{equation*}
\leq \sum\limits_{q=0}^{t}\left( M_{q+1}-M_{q}\right) M_{t}^{-\alpha
-1}\sum\limits_{l=0}^{q}D_{M_{l}}\left( x\right)
\end{equation*}%
\begin{equation*}
\leq \sum\limits_{q=0}^{t}\left( M_{q+1}-M_{q}\right) M_{t}^{-\alpha
-1}\sum\limits_{l=0}^{t}D_{M_{l}}\left( x\right)
\end{equation*}%
\begin{equation*}
\leq M_{t+1}M_{t}^{-\alpha -1}\sum\limits_{l=0}^{t}D_{M_{l}}\left( x\right)
\leq c\cdot M_{t}^{-\alpha }\sum\limits_{l=0}^{t}D_{M_{l}}\left( x\right) .
\end{equation*}

We get

\begin{equation}
B_{22}^{^{\prime }}\leq \sum\limits_{t=0}^{A}M_{t}^{-\alpha
}\sum\limits_{l=0}^{t}D_{M_{l}}\left( x\right) \leq
\sum\limits_{l=0}^{A}M_{l}^{-\alpha }D_{M_{l}}\left( x\right) .
\label{B22-1}
\end{equation}%
As we suppose $n_{k-1}=n_{k-2}=\cdots =n_{t+1}=0,$ from (\ref{Dn}) we can
write

\begin{equation*}
\left\vert D_{j}\left( x\right) \right\vert
=\sum\limits_{l=0}^{q}D_{M_{l}}\left( x\right)
=\sum\limits_{l=0}^{t}D_{M_{l}}\left( x\right) ,
\end{equation*}%
therefore%
\begin{eqnarray*}
\sum\limits_{q=t+1}^{k-1}\sum\limits_{j=M_{q}}^{M_{q+1}-1}\left\vert
A_{n^{\left( k-1\right) }+j}^{-\alpha -1}\right\vert \left\vert D_{j}\left(
x\right) \right\vert &\leq
&\sum\limits_{q=t+1}^{k-1}\sum\limits_{j=M_{q}}^{M_{q+1}-1}M_{q}^{-\alpha
-1}\sum\limits_{l=0}^{q}D_{M_{l}}\left( x\right) \\
&\leq &\sum\limits_{q=t+1}^{k-1}M_{q}^{-\alpha
}\sum\limits_{l=0}^{q}D_{M_{l}}\left( x\right) \leq M_{t}^{-\alpha
}\sum\limits_{l=0}^{t}D_{M_{l}}\left( x\right) .
\end{eqnarray*}%
Consequently,%
\begin{equation}
B_{22}^{^{\prime \prime }}\leq c\left( \alpha \right)
\sum\limits_{l=0}^{A}M_{l}^{-\alpha }D_{M_{l}}\left( x\right) .
\label{B22-2}
\end{equation}%
Combining (\ref{B1}), (\ref{B21}), (\ref{B22-1}) and (\ref{B22-2}) \ we
complete the proof of Lemma \ref{Lemma2}.
\end{proof}

For Walsh system Lemma \ref{Lemma1} and Lemma \ref{Lemma2} see in \cite{gog1}%
.

Suppose that $Z_{\beta }^{\left( k\right) }=\left( 0,\ldots ,0,x_{q}\neq
0,x_{q+1},\ldots ,x_{k-1},0,\ldots \right) ,$ then 
\begin{equation*}
\beta =\sum\limits_{j=q}^{k-1}\left( \frac{x_{j}}{M_{j+1}}\right) M_{k}\sim 
\frac{M_{k}}{M_{q}}
\end{equation*}

\begin{lemma}
\label{Lemma3} Let $\alpha \in \left( 0,1\right) .$ Then%
\begin{equation*}
\left\vert K_{n}^{-\alpha }\left( Z_{\beta }^{\left( k\right) }\right)
\right\vert \leq \frac{c\left( \alpha \right) }{\beta ^{1-\alpha }}M_{k}.
\end{equation*}
\end{lemma}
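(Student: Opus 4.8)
The plan is to bound $\left\vert K_{n}^{-\alpha }\left( Z_{\beta }^{\left( k\right) }\right) \right\vert$ by specializing Lemma~\ref{Lemma2} to the point $x=Z_{\beta }^{\left( k\right) }$. Recall that if $Z_{\beta }^{\left( k\right) }=\left( 0,\ldots ,0,x_{q}\neq 0,x_{q+1},\ldots ,x_{k-1},0,\ldots \right)$, then $Z_{\beta }^{\left( k\right) }\in I_{q}$ but $Z_{\beta }^{\left( k\right) }\notin I_{q+1}$, so by the formula \eqref{dir} for the Dirichlet kernel at $M_l$, we have $D_{M_{l}}\left( Z_{\beta }^{\left( k\right) }\right) =M_{l}$ for $0\le l\le q$ and $D_{M_{l}}\left( Z_{\beta }^{\left( k\right) }\right) =0$ for $l>q$. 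Therefore Lemma~\ref{Lemma2} gives
\begin{equation*}
\left\vert K_{n}^{-\alpha }\left( Z_{\beta }^{\left( k\right) }\right) \right\vert \leq \frac{c\left( \alpha \right) }{A_{n-1}^{-\alpha }}\sum\limits_{l=0}^{q}M_{l}^{-\alpha }M_{l}=\frac{c\left( \alpha \right) }{A_{n-1}^{-\alpha }}\sum\limits_{l=0}^{q}M_{l}^{1-\alpha }.
\end{equation*}

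Next I would estimate the geometric-type sum. Since $m$ is bounded and each $m_j\ge 2$, the sequence $M_l$ grows at least geometrically, so $\sum_{l=0}^{q}M_{l}^{1-\alpha }\le c\left( \alpha\right) M_{q}^{1-\alpha }$ (using $1-\alpha>0$; the ratios $M_{l+1}/M_l=m_l$ are bounded below by $2$, which makes $\sum_l M_l^{1-\alpha}$ dominated by a constant times its last term). Combining, and using \eqref{A2} in the form $A_{n-1}^{-\alpha }\sim \left(n-1\right)^{-\alpha}$ together with $M_{A}\le n<M_{A+1}$, one gets $A_{n-1}^{-\alpha }\sim n^{-\alpha}$ — but we must be careful: $A_{n-1}^{-\alpha}$ appears in the denominator, and $A_{n-1}^{-\alpha}\sim n^{-\alpha}\to 0$, which would make the bound blow up. This is the point to watch: in fact I expect the intended reading of Lemma~\ref{Lemma2} and the definition of $K_n^{-\alpha}$ is that the factor $1/A_{n-1}^{-\alpha}$ has already been accounted for, i.e. $A_{n-1}^{-\alpha}$ here denotes $A_{n-1}^{\alpha-1}\sim n^{\alpha-1}$ or the statement of Lemma~\ref{Lemma2} absorbs it, so that the surviving estimate is $\left\vert K_{n}^{-\alpha}\left(Z_{\beta}^{(k)}\right)\right\vert\le c\left(\alpha\right)\sum_{l=0}^{q}M_l^{1-\alpha}\le c\left(\alpha\right)M_q^{1-\alpha}$.

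Finally I would convert $M_q^{1-\alpha}$ into the desired form $\beta^{\alpha-1}M_k$. From the displayed relation $\beta \sim M_k/M_q$ (stated just before the lemma), we have $M_q\sim M_k/\beta$, hence
\begin{equation*}
M_{q}^{1-\alpha }\sim \left( \frac{M_{k}}{\beta }\right) ^{1-\alpha }=\frac{M_{k}^{1-\alpha }}{\beta ^{1-\alpha }}\leq \frac{M_{k}}{\beta ^{1-\alpha }},
\end{equation*}
the last inequality because $M_k\ge 1$ and $1-\alpha<1$. This yields $\left\vert K_{n}^{-\alpha }\left( Z_{\beta }^{\left( k\right) }\right) \right\vert \leq \frac{c\left( \alpha \right) }{\beta ^{1-\alpha }}M_{k}$, as claimed.

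The main obstacle is the bookkeeping around the normalizing factor $1/A_{n-1}^{-\alpha }$: one has to be sure whether it survives into the final bound or cancels, and to reconcile the asymptotics $A_{n-1}^{-\alpha }\sim n^{-\alpha }$ with $n\asymp M_A$ and $M_q\lesssim M_A$ so that $n^{-\alpha}\cdot\sum_{l\le q}M_l^{1-\alpha}\lesssim M_q^{1-\alpha}\cdot\left(M_q/n\right)^{\alpha}\le M_q^{1-\alpha}$, since $M_q\le n$. Once that is sorted, the geometric-sum estimate and the substitution $M_q\sim M_k/\beta$ are routine.
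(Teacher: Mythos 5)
Your route is the same as the paper's --- specialize Lemma \ref{Lemma2} to $x=Z_{\beta}^{(k)}$, use $D_{M_l}(Z_{\beta}^{(k)})=M_l$ for $l\le q$ and $=0$ for $l>q$, bound the lacunary sum by $c(\alpha)M_q^{1-\alpha}$, and substitute $\beta\sim M_k/M_q$ --- and all of those steps are fine. The genuine gap is precisely the point you flag and then resolve incorrectly: the factor $1/A_{n-1}^{-\alpha}$. It does not cancel, it is not a disguised $A_{n-1}^{\alpha-1}$, and Lemma \ref{Lemma2} has not absorbed it; by (\ref{A2}) it is $\sim n^{\alpha}$, which grows with $n$ and genuinely survives into the estimate. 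Your closing computation quietly replaces it by $n^{-\alpha}$ (a sign error in the exponent), and the bound you arrive at, $c(\alpha)M_q^{1-\alpha}\lesssim c(\alpha)M_k^{1-\alpha}/\beta^{1-\alpha}$, is stronger than the stated lemma by exactly the factor $M_k^{\alpha}$ you dropped --- which is why you then need the artificial padding $M_k^{1-\alpha}\le M_k$ at the end.

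The correct bookkeeping uses the (implicit) hypothesis that $n\le cM_k$, which holds in every application of the lemma in the paper: the kernels evaluated at $Z_{\beta}^{(k)}$ are $K_{n'}^{-\alpha}$ with $n'<M_k$ and $K_{n-1-(n_k-1)M_k}^{-\alpha}$ with $n-1-(n_k-1)M_k<2M_k$. Then $1/A_{n-1}^{-\alpha}\le c(\alpha)\,n^{\alpha}\le c(\alpha)\,M_k^{\alpha}$, and
\begin{equation*}
\frac{1}{A_{n-1}^{-\alpha}}\sum\limits_{l=0}^{q}M_{l}^{1-\alpha}\le c(\alpha)\,M_{k}^{\alpha}M_{q}^{1-\alpha}\le c(\alpha)\,M_{k}^{\alpha}\left(\frac{M_{k}}{\beta}\right)^{1-\alpha}=\frac{c(\alpha)}{\beta^{1-\alpha}}M_{k},
\end{equation*}
with the exponents matching exactly and no slack left over. (To be fair, the paper's one-line proof also suppresses the step $n^{\alpha}\le M_k^{\alpha}$, and strictly speaking the lemma should carry the hypothesis $n\le cM_k$, since for $n\gg M_k$ the claim does not follow from Lemma \ref{Lemma2}; but that restriction, not a cancellation, is the missing ingredient in your argument.)
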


\begin{proof}
From Lemma \ref{Lemma2}, (\ref{A2}) \ and (\ref{dir1}) we get\bigskip 
\begin{eqnarray*}
\left\vert K_{n}^{-\alpha }\left( Z_{\beta }^{\left( k\right) }\right)
\right\vert &\leq &\frac{c\left( \alpha \right) }{A_{n-1}^{-\alpha }}%
\sum\limits_{l=0}^{A}M_{l}^{-\alpha }D_{M_{l}}\left( Z_{\beta }^{\left(
k\right) }\right) \\
&\leq &\frac{c\left( \alpha \right) }{A_{n-1}^{-\alpha }}\sum%
\limits_{l=0}^{q}M_{l}^{-\alpha }M_{l}\leq \frac{c\left( \alpha \right) }{%
\beta ^{1-\alpha }}M_{k}.
\end{eqnarray*}
\end{proof}

\begin{lemma}
\label{agaev} \cite{AVDR} Let $a_{1},\ldots ,a_{n}$ be real numbers. Then 
\begin{equation*}
\frac{1}{n}\int\limits_{G_{m}}\left\vert
\sum\limits_{k=1}^{n}a_{k}D_{k}\left( x\right) \right\vert d\mu \left(
x\right) \leq \frac{c}{\sqrt{n}}\left( \sum\limits_{k=1}^{n}a_{k}^{2}\right)
^{\frac{1}{2}},
\end{equation*}%
where c is absolute constant.
\end{lemma}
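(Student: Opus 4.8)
The plan is to use the expansion (\ref{Dn}) to collapse the entire kernel $\sum_{k=1}^{n}a_{k}D_{k}$ onto the finitely many scales $M_{0},\dots ,M_{A}$, where $M_{A}\le n<M_{A+1}$. Substituting (\ref{Dn}) into each $D_{k}$ and interchanging the (finite) sums gives the pointwise identity
\begin{equation*}
\sum_{k=1}^{n}a_{k}D_{k}=\sum_{j=0}^{A}D_{M_{j}}\,\Phi _{j},\qquad
\Phi _{j}(x):=\sum_{k=1}^{n}a_{k}\psi _{k}(x)\sum_{a=m_{j}-k_{j}}^{m_{j}-1}r_{j}^{a}(x),
\end{equation*}
the terms with $j>A$ being absent because $k_{j}=0$ for every $k\le n$. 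By (\ref{dir}) the function $D_{M_{j}}$ is non-negative, supported on $I_{j}$, and equal to $M_{j}$ there, while $\mu (I_{j})=M_{j}^{-1}$; hence the triangle inequality and the Cauchy--Schwarz inequality yield
\begin{equation*}
\frac{1}{n}\int_{G_{m}}\Big|\sum_{k=1}^{n}a_{k}D_{k}\Big|\,d\mu
\le \frac{1}{n}\sum_{j=0}^{A}M_{j}\int_{I_{j}}|\Phi _{j}|\,d\mu
\le \frac{1}{n}\sum_{j=0}^{A}M_{j}^{1/2}\Big(\int_{I_{j}}|\Phi _{j}|^{2}\,d\mu \Big)^{1/2}.
\end{equation*}
So everything reduces to the estimate $\int_{I_{j}}|\Phi _{j}|^{2}\,d\mu \le c\,m_{j}^{2}\sum_{k=1}^{n}a_{k}^{2}$; granting it, $\sum_{j=0}^{A}M_{j}^{1/2}$ is a geometric series bounded by $c\,M_{A}^{1/2}\le c\,n^{1/2}$ (because $M_{j+1}\ge 2M_{j}$ and $M_{A}\le n$), and the right-hand side above becomes $\le c\,n^{-1/2}(\sum_{k}a_{k}^{2})^{1/2}$, which is the assertion.

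To prove the $L^{2}(I_{j})$-estimate I would start from the elementary identity
\begin{equation*}
\psi _{k}(x)\sum_{a=m_{j}-k_{j}}^{m_{j}-1}r_{j}^{a}(x)=\psi _{\bar{k}}(x)\sum_{b=0}^{k_{j}-1}r_{j}^{b}(x),
\end{equation*}
where $\bar{k}$ denotes $k$ with its $j$-th digit set to $0$. On $I_{j}$ one has $r_{i}(x)=1$ for $i<j$, so the restriction of $\psi _{\bar{k}}$ to $I_{j}$ depends only on the high part $p(k):=\lfloor k/M_{j+1}\rfloor$ and is a Vilenkin character of the tail group $\prod_{i\ge j+1}Z_{m_{i}}$. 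Writing $k=q+k_{j}M_{j}+p(k)M_{j+1}$ with $0\le q<M_{j}$, grouping the sum that defines $\Phi _{j}\big|_{I_{j}}$ according to $p(k)$, and expanding the finite geometric sums, Parseval's identity applied first on the tail group and then on $Z_{m_{j}}$ turns $\int_{I_{j}}|\Phi _{j}|^{2}\,d\mu$ into $M_{j}^{-1}\sum_{p,b}|\gamma _{p,b}|^{2}$, where $\gamma _{p,b}$ is a sum of at most $M_{j}m_{j}$ of the numbers $a_{k}$ (namely those with high part $p$ and $j$-th digit exceeding $b$). One application of Cauchy--Schwarz to each $\gamma _{p,b}$, followed by summation in $p$ and $b$ — in which each $a_{k}^{2}$ is counted at most $m_{j}$ times — bounds $\sum_{p,b}|\gamma _{p,b}|^{2}$ by $M_{j}m_{j}^{2}\sum_{k}a_{k}^{2}$, which is exactly what is needed; the implied constant depends only on $\sup_{k}m_{k}$.

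The one genuinely delicate point is this $L^{2}(I_{j})$-estimate, and inside it the bookkeeping of the digit decomposition together with the truncation coming from the constraint $k\le n$ in the top block $p(k)=\lfloor n/M_{j+1}\rfloor$. That truncation only deletes terms from each $\gamma _{p,b}$, so the count ``at most $M_{j}m_{j}$ terms'' and every subsequent inequality are unaffected. Everything else — the expansion (\ref{Dn}), the support of $D_{M_{j}}$ from (\ref{dir}), and the geometric summation of $M_{j}^{1/2}$ — is routine.
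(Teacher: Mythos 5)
The paper does not prove this lemma at all: it is quoted verbatim from the reference [AVDR] and used as a black box, so there is no ``paper's proof'' to compare against. Your blind argument is, as far as I can check, a correct and essentially self-contained proof for the bounded case. The reduction is sound: substituting (\ref{Dn}) and interchanging the finite sums does give $\sum_{k\le n}a_kD_k=\sum_{j\le A}D_{M_j}\Phi_j$ (the $j>A$ terms vanish since $k_j=0$ for $k\le n<M_{A+1}$), the support/size facts from (\ref{dir}) plus Cauchy--Schwarz on $I_j$ correctly reduce everything to $\int_{I_j}|\Phi_j|^2\,d\mu\le c\,m_j^2\sum_k a_k^2$, and the geometric summation $\sum_{j\le A}M_j^{1/2}\le cM_A^{1/2}\le c\sqrt n$ is valid because $M_{j+1}\ge 2M_j$. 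The key $L^2$ estimate also checks out: the digit identity $\psi_k\sum_{a=m_j-k_j}^{m_j-1}r_j^a=\psi_{\bar k}\sum_{b<k_j}r_j^b$ is correct, on $I_j$ the functions $\chi_p r_j^b$ are genuinely distinct characters of the subgroup $I_j$ for distinct $(p,b)$, so Parseval gives $M_j^{-1}\sum_{p,b}|\gamma_{p,b}|^2$, and since $\gamma_{p,b}$ has at most $M_j m_j$ summands while each $a_k^2$ occurs for exactly $k_j\le m_j$ values of $b$, you get $\sum_{p,b}|\gamma_{p,b}|^2\le M_jm_j^2\sum_k a_k^2$ as claimed. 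The only caveat is the one you already flag: your constant is $O(\sup_k m_k)$ rather than absolute (the loss enters through $M_j^{1/2}m_j$ at $j=A$), so strictly speaking you prove a slightly weaker statement than the lemma as quoted; this is harmless here because the paper works exclusively on bounded Vilenkin groups, and every other constant in the paper already implicitly depends on $\sup_k m_k$. If you wanted the absolute constant of [AVDR] you would need a sharper treatment of the geometric sums $\sum_{b<k_j}r_j^b$ instead of the crude count ``at most $m_j$ values of $b$''.
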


\begin{lemma}
\cite{Tep} \label{tsitsi} \bigskip Let $f\in C\left( G_{m}\right) .$ \ Then
for every $\alpha \in \left( 0,1\right) $ the following estimation holds%
\begin{equation*}
\frac{1}{A_{n}^{-\alpha }}\left\Vert \int\limits_{G_{m}}\sum\limits_{\nu
=0}^{M_{k-1}-1}A_{n-\nu }^{-\alpha }\psi _{\nu }\left( u\right) \left[
f\left( \cdot +u\right) -f\left( \cdot \right) \right] d\mu \left( u\right)
\right\Vert _{C}
\end{equation*}%
\begin{equation*}
\leq c\left( p,\alpha \right) \sum\limits_{r=0}^{k-1}\frac{M_{r}}{M_{k}}%
\omega \left( \frac{1}{M_{k}},f\right) _{C},
\end{equation*}%
where $M_{k}\leq n<M_{k+1}.$
\end{lemma}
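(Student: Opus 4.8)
The plan is to replace the Vilenkin kernel $\sum_{\nu=0}^{M_{k-1}-1}A_{n-\nu}^{-\alpha}\psi_\nu$ by a combination of Dirichlet kernels and then to exploit two facts: that $D_{M_j}$ lives on $I_j$ (formula \eqref{dir}), and that an arbitrary linear combination $\sum a_\nu D_\nu$ is small in $L^1$ (Lemma \ref{agaev}). \emph{Step 1 (summation by parts).} Since $\psi_\nu=D_{\nu+1}-D_\nu$ and $D_0=0$, Abel summation together with \eqref{A3}, rewritten as $A_m^{-\alpha}-A_{m-1}^{-\alpha}=A_m^{-\alpha-1}$, gives
\begin{equation*}
\sum_{\nu=0}^{M_{k-1}-1}A_{n-\nu}^{-\alpha}\psi_\nu(u)=A_{n-M_{k-1}+1}^{-\alpha}D_{M_{k-1}}(u)+\sum_{\nu=1}^{M_{k-1}-1}A_{n-\nu+1}^{-\alpha-1}D_\nu(u).
\end{equation*}
Substituting this and using $\int_{G_m}D_\nu\,d\mu=1$ from \eqref{dir1} to absorb the subtracted term $-f(\cdot)$ coefficientwise, the expression to be estimated equals $P_1+P_2$ with
\begin{equation*}
P_1(x):=A_{n-M_{k-1}+1}^{-\alpha}\int_{G_m}D_{M_{k-1}}(u)\big[f(x+u)-f(x)\big]\,d\mu(u),
\end{equation*}
\begin{equation*}
P_2(x):=\int_{G_m}\Big(\sum_{\nu=1}^{M_{k-1}-1}A_{n-\nu+1}^{-\alpha-1}D_\nu(u)\Big)\big[f(x+u)-f(x)\big]\,d\mu(u).
\end{equation*}

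\emph{Step 2 (the localized term $P_1$).} By \eqref{dir}, $D_{M_{k-1}}=M_{k-1}\mathbf{1}_{I_{k-1}}$, so $P_1(x)=A_{n-M_{k-1}+1}^{-\alpha}M_{k-1}\int_{I_{k-1}}[f(x+u)-f(x)]\,d\mu(u)$. Splitting $I_{k-1}$ via the decomposition \eqref{Gm} into its subcosets at levels $k,k-1,\dots$ and successively peeling off the coset means writes $\int_{I_{k-1}}[f(x+u)-f(x)]\,d\mu(u)$ as a telescoping combination of increments controlled by the modulus of continuity and carrying weights $M_r/M_k$. As $M_{k-1}\le n/2$, \eqref{A2} yields $A_{n-M_{k-1}+1}^{-\alpha}\asymp n^{-\alpha}\asymp A_n^{-\alpha}$, and therefore $\|P_1\|_C\le c(\alpha)\,A_n^{-\alpha}\sum_{r=0}^{k-1}(M_r/M_k)\,\omega(1/M_k,f)_C$.

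\emph{Step 3 (the oscillating term $P_2$).} Each $D_\nu$ with $\nu<M_{k-1}$ is constant on every coset $Z_\beta^{(k-1)}+I_{k-1}$; hence, by \eqref{Gm}, write $P_2(x)$ as a sum over $\beta\in\{0,\dots,M_{k-1}-1\}$ and on the $\beta$-th coset replace $f(x+u)$ by $f(x+Z_\beta^{(k-1)})$, the error being controlled by the modulus of continuity of $f$ on a translate of $I_{k-1}$. This leaves two contributions: a \emph{global} one, in which the $L^1$-norm $\int_{G_m}\big|\sum_{\nu=1}^{M_{k-1}-1}A_{n-\nu+1}^{-\alpha-1}D_\nu\big|\,d\mu$ is estimated by Lemma \ref{agaev} together with \eqref{A2} to be $\le c\,M_{k-1}M_k^{-\alpha-1}$; and a \emph{difference} one, $M_{k-1}^{-1}\sum_\beta|K(Z_\beta^{(k-1)})|\,\big|f(x+Z_\beta^{(k-1)})-f(x)\big|$ with $K:=\sum_{\nu=1}^{M_{k-1}-1}A_{n-\nu+1}^{-\alpha-1}D_\nu$, which one handles by grouping the $\beta$'s according to the position $q$ of their first nonzero digit ($\beta\sim M_{k-1}/M_q$), so that both $|f(x+Z_\beta^{(k-1)})-f(x)|$ and the pointwise size of $K$ on that block are under control, the latter in the spirit of Lemma \ref{Lemma3}. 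Collecting the contributions and summing the geometric series in $M_r/M_k$ gives $\|P_2\|_C\le c(\alpha)\,A_n^{-\alpha}\sum_{r=0}^{k-1}(M_r/M_k)\,\omega(1/M_k,f)_C$. Adding this to the bound for $P_1$ and dividing by $A_n^{-\alpha}$ completes the proof.

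\emph{Main obstacle.} The real difficulty is Step 3: bounding $|f(\cdot+u)-f(\cdot)|$ by $2\|f\|_C$ is far too wasteful, so one is forced to localize through \eqref{Gm} before invoking Lemma \ref{agaev}; the delicate point is then to keep track of how the $L^1$-gain $1/\sqrt{M_{k-1}}$ of Lemma \ref{agaev}, the $M_{k-1}$ cosets each of measure $1/M_{k-1}$, and the smallness of $f$ on the neighbourhood $I_k$ fit together to reproduce exactly the weight $\sum_{r=0}^{k-1}M_r/M_k$, the modulus of continuity thereby appearing at the scale $1/M_k$.
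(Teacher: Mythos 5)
First, a point of reference: the paper itself contains no proof of Lemma \ref{tsitsi} --- it is imported from \cite{Tep} with only a citation --- so there is no in-house argument to compare yours against. Your overall strategy (Abel summation via \eqref{A3} to convert $\sum_{\nu}A_{n-\nu}^{-\alpha}\psi_{\nu}$ into Dirichlet kernels, localization through the coset decomposition \eqref{Gm}, and Lemma \ref{agaev} for the $L^{1}$-norm of $\sum a_{\nu}D_{\nu}$) is the standard route for such estimates and matches in spirit how the author handles the pieces $II$ and $III$ in the proof of Theorem \ref{theorem1}. Your Step 1 is correct as written.

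The genuine gap is in Steps 2 and 3, precisely where you claim the modulus of continuity comes out ``at the scale $1/M_{k}$''. The kernel $\sum_{\nu=0}^{M_{k-1}-1}A_{n-\nu}^{-\alpha}\psi_{\nu}$ is a polynomial of degree less than $M_{k-1}$ and is therefore constant on cosets of $I_{k-1}$; it is blind to oscillations of $f$ at scale $1/M_{k}$, and no ``telescoping'' or ``peeling off of coset means'' can convert the increments $f(x+u)-f(x)$ with $u\in I_{k-1}\setminus I_{k}$ into quantities controlled by $\omega(1/M_{k},f)$ --- they genuinely cost $\omega(1/M_{k-1},f)$ (and, after your grouping by the first nonzero digit, $\omega(1/M_{r},f)$ for $r\le k-1$). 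Concretely, take $f=\psi_{M_{k-1}}=r_{k-1}$: then $\int_{I_{k-1}}f(x+u)\,d\mu(u)=0$, so your $P_{1}(x)=-A_{n-M_{k-1}+1}^{-\alpha}f(x)$ and $\|P_{1}\|_{C}/A_{n-1}^{-\alpha}\sim 1$, whereas $\omega(1/M_{k},\psi_{M_{k-1}})=0$ because $\psi_{M_{k-1}}$ is constant on cosets of $I_{k}$; so the bound you announce for $P_{1}$ is false. The same example makes the whole left-hand side equal to $\|f\|_{C}=1$ while the stated right-hand side vanishes, so the lemma cannot be true exactly as printed: since $\sum_{r=0}^{k-1}M_{r}/M_{k}\le c$ on a bounded group, the displayed sum would be pointless unless the intended bound is $c(\alpha)\sum_{r=0}^{k-1}\frac{M_{r}}{M_{k}}\,\omega\left(\frac{1}{M_{r}},f\right)_{C}$, which is what your argument, carried out honestly (trivial estimate of $P_{1}$ by $\omega(1/M_{k-1},f)$, replacement error in $P_{2}$ by $\omega(1/M_{k-1},f)$, and the difference term by $\omega(1/M_{q},f)$ on the block with first nonzero digit at position $q$), actually produces. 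You should either prove that corrected version or flag the misprint; as it stands, the step asserting the scale $1/M_{k}$ is not a computation but a wish, and it is the one point where the proof fails.
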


\section{Proofs of main results}

\begin{proof}
of Theorem \ref{theorem1}. From (\ref{dir1}) we can write 
\begin{eqnarray*}
\sigma _{n}^{-\alpha }(f;x)-f\left( x\right) &=&\frac{1}{A_{n-1}^{-\alpha }}%
\int\limits_{G_{m}}\sum\limits_{\upsilon =0}^{n-1}A_{n-1-\upsilon }^{-\alpha
}\psi _{\upsilon }\left( t\right) \left[ f\left( x-t\right) -f\left(
x\right) \right] d\mu \left( t\right) \\
&=&\frac{1}{A_{n-1}^{-\alpha }}\int\limits_{G_{m}}\sum\limits_{\upsilon
=0}^{M_{k-1}-1}A_{n-1-\upsilon }^{-\alpha }\psi _{\upsilon }\left( t\right) %
\left[ f\left( x-t\right) -f\left( x\right) \right] d\mu \left( t\right) \\
&&+\frac{1}{A_{n-1}^{-\alpha }}\int\limits_{G_{m}}\sum\limits_{\upsilon
=M_{k-1}}^{M_{k}-1}A_{n-1-\upsilon }^{-\alpha }\psi _{\upsilon }\left(
t\right) \left[ f\left( x-t\right) -f\left( x\right) \right] d\mu \left(
t\right) \\
&&+\frac{1}{A_{n-1}^{-\alpha }}\int\limits_{G_{m}}\sum\limits_{\upsilon
=M_{k}}^{n_{k}M_{k}-1}A_{n-1-\upsilon }^{-\alpha }\psi _{\upsilon }\left(
t\right) \left[ f\left( x-t\right) -f\left( x\right) \right] d\mu \left(
t\right) \\
&&+\frac{1}{A_{n-1}^{-\alpha }}\int\limits_{G_{m}}\sum\limits_{\upsilon
=n_{k}M_{k}}^{n-1}A_{n-1-\upsilon }^{-\alpha }\psi _{\upsilon }\left(
t\right) \left[ f\left( x-t\right) -f\left( x\right) \right] d\mu \left(
t\right) \\
&=&I+II+III+IV.
\end{eqnarray*}%
\ \ First we estimate $IV$, we have%
\begin{equation*}
IV=\frac{A_{n^{^{\prime }}-1}^{-\alpha }}{A_{n-1}^{-\alpha }}%
\int\limits_{G_{m}}\psi _{M_{k}}^{n_{k}}\left( t\right) K_{n^{^{\prime
}}}^{-\alpha }\left( t\right) \left[ f\left( x-t\right) -f\left( x\right) %
\right] d\mu \left( t\right) .
\end{equation*}%
Since 
\begin{equation*}
D_{n^{^{\prime }}}\left( Z_{\beta }^{\left( k\right) }+t\right)
=D_{n^{^{\prime }}}\left( Z_{\beta }^{\left( k\right) }\right) ,\text{ \ \ }%
x\in I_{k},\text{ }0\leq \beta <M_{k},\text{ }n^{^{\prime }}<M_{k},
\end{equation*}%
we have 
\begin{equation}
K_{n^{^{\prime }}}^{-\alpha }\left( Z_{\beta }^{\left( k\right) }+t\right)
=K_{n^{^{\prime }}}^{-\alpha }\left( Z_{\beta }^{\left( k\right) }\right) .
\label{Kn}
\end{equation}%
Hence, from (\ref{Gm}) we can write%
\begin{equation}
IV=\frac{A_{n^{^{\prime }}-1}^{-\alpha }}{A_{n-1}^{-\alpha }}%
\sum\limits_{\beta =0}^{M_{k}-1}\int\limits_{I_{k}+Z_{\beta }^{\left(
k\right) }}f\left( x-t\right) \psi _{M_{k}}^{n_{k}}\left( t\right)
K_{n^{^{\prime }}}^{-\alpha }\left( t\right) d\mu \left( t\right)  \notag
\end{equation}%
\begin{equation*}
=\frac{A_{n^{^{\prime }}-1}^{-\alpha }}{A_{n-1}^{-\alpha }}%
\sum\limits_{\beta =0}^{M_{k}-1}\int\limits_{I_{k}}f\left( x-t-Z_{\beta
}^{\left( k\right) }\right) \psi _{M_{k}}^{n_{k}}\left( t\right) \psi
_{M_{k}}^{n_{k}}\left( Z_{\beta }^{\left( k\right) }\right) K_{n^{^{\prime
}}}^{-\alpha }\left( Z_{\beta }^{\left( k\right) }\right) d\mu \left(
t\right)
\end{equation*}%
\begin{equation*}
=\frac{A_{n^{^{\prime }}-1}^{-\alpha }}{A_{n-1}^{-\alpha }}%
\int\limits_{I_{k}}f\left( x-t\right) \psi _{M_{k}}^{n_{k}}\left( t\right)
K_{n^{^{\prime }}}^{-\alpha }\left( 0\right) d\mu \left( t\right)
\end{equation*}%
\begin{equation*}
+\frac{A_{n^{^{\prime }}-1}^{-\alpha }}{A_{n-1}^{-\alpha }}%
\sum\limits_{\beta =1}^{M_{k}-1}\int\limits_{I_{k}}f\left( x-t-Z_{\beta
}^{\left( k\right) }\right) \psi _{M_{k}}^{n_{k}}\left( t\right) \psi
_{M_{k}}^{n_{k}}\left( Z_{\beta }^{\left( k\right) }\right) K_{n^{^{\prime
}}}^{-\alpha }\left( Z_{\beta }^{\left( k\right) }\right) d\mu \left(
t\right)
\end{equation*}%
\begin{equation*}
=IV_{1}+IV_{2}.
\end{equation*}%
It is clear that%
\begin{equation}
\psi _{M_{k}}^{-n_{k}}\left( e_{k}\right) \psi _{M_{k}}^{n_{k}}\left(
t\right) =e^{\frac{-2\pi i}{m_{k}}n_{k}}e^{\frac{2\pi it_{k}}{m_{k}}%
n_{k}}=e^{\frac{2\pi i\left( t_{k}-1\right) }{m_{k}}n_{k}}=\psi
_{M_{k}}^{n_{k}}\left( t-e_{k}\right)  \label{truk}
\end{equation}%
and%
\begin{eqnarray}
&&\left\vert 1-\psi _{M_{k}}^{-n_{k}}\left( e_{k}\right) \right\vert
\label{truk1} \\
&=&\left\vert 1-\cos \frac{2\pi }{m_{k}}n_{k}+i\sin \frac{2\pi }{m_{k}}%
n_{k}\right\vert  \notag \\
&=&\sqrt{1-2\cos \frac{2\pi }{m_{k}}n_{k}+\cos ^{2}\frac{2\pi }{m_{k}}%
n_{k}+\sin ^{2}\frac{2\pi }{m_{k}}n_{k}}  \notag \\
&=&\sqrt{2-2\cos \frac{2\pi }{m_{k}}n_{k}}=\left\vert 2\sin \frac{\pi }{m_{k}%
}n_{k}\right\vert \geq 2\sin \frac{\pi }{m}=c.  \notag
\end{eqnarray}%
We get%
\begin{eqnarray*}
\psi _{M_{k}}^{-n_{k}}\left( e_{k}\right) \cdot IV_{1} &=&\frac{%
A_{n^{^{\prime }}-1}^{-\alpha }}{A_{n-1}^{-\alpha }}\int\limits_{I_{k}}f%
\left( x-t\right) K_{n^{^{\prime }}}^{-\alpha }\left( 0\right) \psi
_{M_{k}}^{n_{k}}\left( t-e_{k}\right) \psi _{M_{k}}^{n_{k}}\left( 0\right)
d\mu \left( t\right) \\
&=&\frac{A_{n^{^{\prime }}-1}^{-\alpha }}{A_{n-1}^{-\alpha }}%
\int\limits_{I_{k}}f\left( x-t-e_{k}\right) K_{n^{^{\prime }}}^{-\alpha
}\left( 0\right) \psi _{M_{k}}^{n_{k}}\left( t\right) \psi
_{M_{k}}^{n_{k}}\left( 0\right) d\mu \left( t\right)
\end{eqnarray*}%
Since $\left\vert K_{n}^{-\alpha }\left( f\right) \right\vert =O\left(
n\right) $ we have%
\begin{equation}
\left\vert IV_{1}-\psi _{M_{k}}^{-n_{k}}\left( e_{k}\right) IV_{1}\right\vert
\notag
\end{equation}%
\begin{equation*}
\leq \left\vert IV_{1}\right\vert \cdot \left\vert 1-\psi
_{M_{k}}^{-n_{k}}\left( e_{k}\right) \right\vert
\end{equation*}%
\begin{equation*}
\leq \left\vert 1-\psi _{M_{k}}^{-n_{k}}\left( e_{k}\right) \right\vert 
\frac{A_{n^{^{\prime }}-1}^{-\alpha }}{A_{n-1}^{-\alpha }}%
\int\limits_{I_{k}}\left\vert \left( f\left( x-t\right) -f\left(
x-t-e_{k}\right) \right) K_{n^{^{\prime }}}^{-\alpha }\left( 0\right) \psi
_{M_{k}}^{n_{k}}\left( t\right) \right\vert d\mu \left( t\right)
\end{equation*}%
\begin{equation*}
\leq c\left( \alpha \right) \omega \left( f,\frac{1}{M_{k}}\right) .
\end{equation*}

Hence from (\ref{truk1})%
\begin{equation}
\left\vert IV_{1}\right\vert \leq c\left( \alpha \right) \omega \left( f,%
\frac{1}{M_{k}}\right) .  \label{IV1}
\end{equation}%
Analogously, from the condition of theorem and Lemma \ref{Lemma3} we can
write%
\begin{equation*}
\left\vert IV_{2}-\psi _{M_{k}}^{-n_{k}}\left( e_{k}\right) IV_{2}\right\vert
\end{equation*}%
\begin{equation*}
\leq \left\vert IV_{2}\right\vert \cdot \left\vert 1-\psi
_{M_{k}}^{-n_{k}}\left( e_{k}\right) \right\vert
\end{equation*}%
\begin{eqnarray*}
&\leq &\left\vert 1-\psi _{M_{k}}^{-n_{k}}\left( e_{k}\right) \right\vert 
\frac{A_{n^{^{\prime }}-1}^{-\alpha }}{A_{n-1}^{-\alpha }}%
M_{k}\int\limits_{I_{k}}\sum\limits_{\beta =1}^{M_{k}-1}\frac{1}{\beta
^{1-\alpha }} \\
&&\times \left\vert \left( f\left( x-t-Z_{\beta }^{\left( k\right) }\right)
-f\left( x-t-Z_{\beta }^{\left( k\right) }-e_{k}\right) \right) \psi
_{M_{k}}^{n_{k}}\left( t\right) \right\vert d\mu \left( t\right) \\
&\leq &c\left( \alpha \right) \frac{\left( n^{^{\prime }}\right) ^{-\alpha }%
}{M_{k}^{1-\alpha }}M_{k}^{1-\alpha }o\left( 1\right) =o\left( 1\right) ,
\end{eqnarray*}%
therefore from (\ref{truk1}) 
\begin{equation}
IV_{2}=o\left( 1\right) ,\text{ \ }as\text{ }k\rightarrow \infty \text{ }
\label{IV2}
\end{equation}%
uniformly with respect to $x\in G_{m}.$

combining (\ref{IV1}) and (\ref{IV2}) we conclude that 
\begin{equation}
IV=o\left( 1\right) ,\text{ \ }as\text{ }k\rightarrow \infty \text{ }
\label{IV}
\end{equation}%
uniformly with respect to $x\in G_{m}.$

For $III$ we can write 
\begin{equation}
III=\frac{1}{A_{n-1}^{-\alpha }}\int\limits_{G_{m}}\sum\limits_{\upsilon
=M_{k}}^{\left( n_{k}-1\right) M_{k}-1}A_{n-1-\upsilon }^{-\alpha }\psi
_{\upsilon }\left( t\right) \left[ f\left( x-t\right) -f\left( x\right) %
\right] d\mu \left( t\right)  \label{III}
\end{equation}%
\begin{equation*}
+\frac{1}{A_{n-1}^{-\alpha }}\int\limits_{G_{m}}\sum\limits_{\upsilon
=\left( n_{k}-1\right) M_{k}}^{n_{k}M_{k}-1}A_{n-1-\upsilon }^{-\alpha }\psi
_{\upsilon }\left( t\right) \left[ f\left( x-t\right) -f\left( x\right) %
\right] d\mu \left( t\right)
\end{equation*}%
\begin{equation*}
=III_{1}+III_{2}.
\end{equation*}%
Since 
\begin{equation*}
\left\Vert f-S_{M_{k}}\left( f\right) \right\Vert _{C}\leq c\cdot \omega
\left( f,\frac{1}{M_{k}}\right) _{C}
\end{equation*}%
Applying Abel's transformation, from ortogonality of Vilenkin system and (%
\ref{A2}) we get%
\begin{equation*}
\left\vert III_{1}\right\vert \leq \frac{1}{A_{n-1}^{-\alpha }}%
\int\limits_{G_{m}}\left\vert \sum\limits_{\upsilon =M_{k}}^{\left(
n_{k}-1\right) M_{k}-1}A_{n-1-\upsilon }^{-\alpha }\psi _{\upsilon }\left(
t\right) \left[ f\left( x-t\right) -S_{M_{k}}\left( f,x-t\right) \right]
\right\vert d\mu \left( t\right)
\end{equation*}%
\begin{equation*}
\leq \frac{1}{A_{n-1}^{-\alpha }}\omega \left( f,\frac{1}{M_{k}}\right)
_{C}\int\limits_{G_{m}}\left\vert \sum\limits_{\upsilon =M_{k}}^{\left(
n_{k}-1\right) M_{k}-1}A_{n-1-\upsilon }^{-\alpha }\psi _{\upsilon }\left(
t\right) \right\vert d\mu \left( t\right)
\end{equation*}%
\begin{equation*}
\leq c\left( \alpha \right) n^{\alpha }\omega \left( f,\frac{1}{M_{k}}%
\right) _{C}\int\limits_{G_{m}}\left\vert \sum\limits_{\upsilon
=M_{k}}^{\left( n_{k}-1\right) M_{k}-2}A_{n-1-\upsilon }^{-\alpha
-1}D_{\upsilon }\left( t\right) \right\vert d\mu \left( t\right)
\end{equation*}%
\begin{equation*}
+c\left( \alpha \right) n^{\alpha }\omega \left( f,\frac{1}{M_{k}}\right)
_{C}d\int\limits_{G_{m}}\left\vert A_{n-1-\left( n_{k}-1\right)
M_{k}-1}^{-\alpha }D_{\left( n_{k}-1\right) M_{k}-1}\left( t\right)
\right\vert \mu \left( t\right)
\end{equation*}%
\begin{equation*}
+c\left( \alpha \right) n^{\alpha }\omega \left( f,\frac{1}{M_{k}}\right)
_{C}\int\limits_{G_{m}}A_{n-1-M_{k}}^{-\alpha }D_{M_{k}}\left( t\right) d\mu
\left( t\right)
\end{equation*}%
\begin{equation*}
=III_{11}+III_{12}+III_{13}.
\end{equation*}%
From Lemma \ref{agaev} and (\ref{A2}) we get%
\begin{eqnarray}
III_{11} &\leq &c\sqrt{n}n^{\alpha }\left( \sum\limits_{\upsilon
=M_{k}}^{\left( n_{k}-1\right) M_{k}-2}\left( n-1-\upsilon \right)
^{-2\alpha -2}\right) ^{\frac{1}{2}}\omega \left( f,\frac{1}{M_{k}}\right)
_{C}  \label{11} \\
&\leq &c\left( \alpha \right) \omega \left( f,\frac{1}{M_{k}}\right) _{C}. 
\notag
\end{eqnarray}%
Consequently, from (\ref{dir}) \ and \ (\ref{dir5}) we get%
\begin{equation}
III_{12}\leq c\left( \alpha \right) \omega \left( f,\frac{1}{M_{k}}\right)
_{C}.  \label{12}
\end{equation}%
Analogously,%
\begin{equation}
III_{13}\leq c\left( \alpha \right) \omega \left( f,\frac{1}{M_{k}}\right)
_{C}.  \label{13}
\end{equation}

Combining (\ref{11})- (\ref{13}) we obtain%
\begin{equation}
\left\vert III_{1}\right\vert \leq c\left( \alpha \right) \omega \left( f,%
\frac{1}{M_{k}}\right) _{C}.  \label{III1}
\end{equation}

Now, we estimate $III_{2}.$ Let $n_{k}>1.$ It is clear that%
\begin{equation*}
\sum\limits_{\upsilon =\left( n_{k}-1\right)
M_{k}}^{n_{k}M_{k}-1}A_{n-1-\upsilon }^{-\alpha }\psi _{\upsilon }\left(
t\right) =\sum\limits_{\upsilon =0}^{M_{k}-1}A_{n-1-\left( n_{k}-1\right)
M_{k}-\upsilon }^{-\alpha }\psi _{\upsilon +\left( n_{k}-1\right)
M_{k}}\left( t\right)
\end{equation*}%
\begin{equation*}
=\psi _{M_{k}}^{n_{k}-1}\left( t\right) \sum\limits_{\upsilon
=0}^{M_{k}-1}A_{n-1-\left( n_{k}-1\right) M_{k}-\upsilon }^{-\alpha }\psi
_{\upsilon }\left( t\right)
\end{equation*}%
\begin{equation*}
=\psi _{M_{k}}^{n_{k}-1}\left( t\right) \sum\limits_{\upsilon
=0}^{n-1-\left( n_{k}-1\right) M_{k}}A_{n-1-\left( n_{k}-1\right)
M_{k}-\upsilon }^{-\alpha }\psi _{\upsilon }\left( t\right)
\end{equation*}%
\begin{equation*}
-\psi _{M_{k}}^{n_{k}-1}\left( t\right) \sum\limits_{\upsilon
=M_{k}}^{n-1-\left( n_{k}-1\right) M_{k}}A_{n-1-\left( n_{k}-1\right)
M_{k}-\upsilon }^{-\alpha }\psi _{\upsilon }\left( t\right)
\end{equation*}%
\begin{equation*}
=\psi _{M_{k}}^{n_{k}-1}\left( t\right) \sum\limits_{\upsilon
=0}^{n-1-\left( n_{k}-1\right) M_{k}}A_{n-1-\left( n_{k}-1\right)
M_{k}-\upsilon }^{-\alpha }\psi _{\upsilon }\left( t\right)
\end{equation*}%
\begin{equation*}
-\psi _{M_{k}}^{n_{k}}\left( t\right) \sum\limits_{\upsilon
=0}^{n-1-n_{k}M_{k}}A_{n-1-n_{k}M_{k}-\upsilon }^{-\alpha }\psi _{\upsilon
}\left( t\right)
\end{equation*}%
\begin{equation*}
=A_{n-1-\left( n_{k}-1\right) M_{k}}^{-\alpha }\psi _{M_{k}}^{n_{k}-1}\left(
t\right) K_{n-1-\left( n_{k}-1\right) M_{k}}^{-\alpha }\left( t\right)
\end{equation*}%
\begin{equation*}
-A_{n-1-n_{k}M_{k}}^{-\alpha }\psi _{M_{k}}^{n_{k}}\left( t\right)
K_{n-1-n_{k}M_{k}}^{-\alpha }\left( t\right) .
\end{equation*}%
Hence, 
\begin{eqnarray}
III_{2} &=&\frac{1}{A_{n-1}^{-\alpha }}\int\limits_{G_{m}}A_{n-1-\left(
n_{k}-1\right) M_{k}}^{-\alpha }K_{n-1-\left( n_{k}-1\right) M_{k}}^{-\alpha
}\left( t\right)  \label{III2} \\
&&\times \psi _{M_{k}}^{n_{k}-1}\left( t\right) \left[ f\left( x-t\right)
-f\left( x\right) \right] d\mu \left( t\right)  \notag \\
&&-\frac{1}{A_{n-1}^{-\alpha }}\int\limits_{G_{m}}A_{n-1-n_{k}M_{k}}^{-%
\alpha }K_{n-1-n_{k}M_{k}}^{-\alpha }\left( t\right)  \notag \\
&&\times \psi _{M_{k}}^{n_{k}}\left( t\right) \left[ f\left( x-t\right)
-f\left( x\right) \right] d\mu t  \notag
\end{eqnarray}%
\begin{equation*}
=III_{21}+III_{22.}
\end{equation*}%
From (\ref{Gm}) we can write%
\begin{equation}
III_{21}=\frac{A_{n-1-\left( n_{k}-1\right) M_{k}}^{-\alpha }}{%
A_{n-1}^{-\alpha }}\sum\limits_{\beta
=0}^{M_{k}-1}\int\limits_{I_{k}+Z_{\beta }^{\left( k\right) }}f\left(
x-t\right)  \label{III2'}
\end{equation}%
\begin{equation*}
\times K_{n-1-\left( n_{k}-1\right) M_{k}}^{-\alpha }\left( t\right) \psi
_{M_{k}}^{n_{k}-1}\left( t\right) d\mu \left( t\right)
\end{equation*}%
\begin{equation*}
=\frac{A_{n-1-\left( n_{k}-1\right) M_{k}}^{-\alpha }}{A_{n-1}^{-\alpha }}%
\sum\limits_{\beta =0}^{M_{k}-1}\int\limits_{I_{k}}f\left( x-t-Z_{\beta
}^{\left( k\right) }\right)
\end{equation*}%
\begin{equation*}
\times K_{n-1-\left( n_{k}-1\right) M_{k}}^{-\alpha }\left( Z_{\beta
}^{\left( k\right) }+t\right) \psi _{M_{k}}^{n_{k}-1}\left( Z_{\beta
}^{\left( k\right) }+t\right) d\mu \left( t\right)
\end{equation*}%
\begin{equation*}
=\frac{A_{n-1-\left( n_{k}-1\right) M_{k}}^{-\alpha }}{A_{n-1}^{-\alpha }}%
\int\limits_{I_{k}}f\left( x-t\right) K_{n-1-\left( n_{k}-1\right)
M_{k}}^{-\alpha }\left( 0\right) \psi _{M_{k}}^{n_{k}-1}\left( t\right) d\mu
\left( t\right)
\end{equation*}%
\begin{equation*}
+\frac{A_{n-1-\left( n_{k}-1\right) M_{k}}^{-\alpha }}{A_{n-1}^{-\alpha }}%
\sum\limits_{\beta =1}^{M_{k}-1}\int\limits_{I_{k}}f\left( x-t-Z_{\beta
}^{\left( k\right) }\right)
\end{equation*}%
\begin{equation*}
\times K_{n-1-\left( n_{k}-1\right) M_{k}}^{-\alpha }\left( Z_{\beta
}^{\left( k\right) }\right) \psi _{M_{k}}^{n_{k}-1}\left( t\right) \psi
_{M_{k}}^{n_{k}-1}\left( Z_{\beta }^{\left( k\right) }\right) d\mu \left(
t\right)
\end{equation*}%
\begin{equation*}
=III_{211}+III_{212}.
\end{equation*}%
Since (see (\ref{truk} ) and (\ref{truk1}))%
\begin{equation*}
\psi _{M_{k}}^{-\left( n_{k}-1\right) }\left( e_{k}\right) \psi
_{M_{k}}^{n_{k}-1}\left( t\right) =\psi _{M_{k}}^{n_{k}-1}\left(
t-e_{k}\right)
\end{equation*}%
and%
\begin{equation}
\left\vert 1-\psi _{M_{k}}^{-\left( n_{k}-1\right) }\left( e_{k}\right)
\right\vert \geq c>0.  \label{c}
\end{equation}%
We get%
\begin{equation*}
\left\vert III_{211}-\psi _{M_{k}}^{-\left( n_{k}-1\right) }\left(
e_{k}\right) III_{211}\right\vert
\end{equation*}%
\begin{equation*}
\leq \left\vert III_{211}\right\vert \cdot \left\vert 1-\psi
_{M_{k}}^{-\left( n_{k}-1\right) }\left( e_{k}\right) \right\vert
\end{equation*}%
\begin{eqnarray*}
&\leq &\frac{A_{n-1-\left( n_{k}-1\right) M_{k}}^{-\alpha }}{%
A_{n-1}^{-\alpha }}\left\vert 1-\psi _{M_{k}}^{-\left( n_{k}-1\right)
}\left( e_{k}\right) \right\vert \\
&&\times \int\limits_{I_{k}}\left\vert \left( f\left( x-t\right) -f\left(
x-t-e_{k}\right) \right) K_{n-1-\left( n_{k}-1\right) M_{k}}^{-\alpha
}\left( 0\right) \psi _{M_{k}}^{n_{k}-1}\left( t\right) \right\vert d\mu
\left( t\right) \\
&\leq &c\left( \alpha \right) \omega \left( f,\frac{1}{M_{k}}\right) _{C}.
\end{eqnarray*}%
Since (\ref{c}) we have 
\begin{equation}
\left\vert III_{211}\right\vert \leq c\left( \alpha \right) \omega \left( f,%
\frac{1}{M_{k}}\right) _{C}.  \label{III211}
\end{equation}%
Analogously for $III_{212}$ from the condition of theorem and Lemma \ref%
{Lemma3} we get%
\begin{equation*}
\left\vert III_{212}-\psi _{M_{k}}^{-\left( n_{k}-1\right) }\left(
e_{k}\right) III_{212}\right\vert
\end{equation*}%
\begin{equation*}
\leq \frac{A_{n-1-\left( n_{k}-1\right) M_{k}}^{-\alpha }}{A_{n-1}^{-\alpha }%
}\left\vert 1-\psi _{M_{k}}^{-\left( n_{k}-1\right) }\left( e_{k}\right)
\right\vert M_{k}
\end{equation*}%
\begin{equation*}
\times \int\limits_{I_{k}}\sum\limits_{\beta =1}^{M_{k}-1}\frac{1}{\beta
^{1-\alpha }}\left\vert \left( f\left( x-t-Z_{\beta }^{\left( k\right)
}\right) -f\left( x-t-Z_{\beta }^{\left( k\right) }-e_{k}\right) \right)
\psi _{M_{k}}^{n_{k}-1}\left( t\right) \right\vert d\mu \left( t\right)
\end{equation*}%
\begin{equation*}
\leq c\left( \alpha \right) \frac{M_{k}^{-\alpha }}{M_{k}^{1-\alpha }}%
M_{k}^{1-\alpha }o\left( 1\right) =o(1),
\end{equation*}%
and consequently, 
\begin{equation}
\left\vert III_{212}\right\vert =o(1),\text{ \ }as\text{ }k\rightarrow \infty
\label{III212}
\end{equation}%
uniformly with respect to $x\in G_{m}.$ Combining (\ref{III2'}), (\ref%
{III211}) and (\ref{III212}) we have 
\begin{equation*}
\left\vert III_{21}\right\vert =o(1),\text{ \ }as\text{ }k\rightarrow \infty
\end{equation*}%
uniformly with respect to $x\in G_{m}.$

Analogously, we can prove the estimation for $III_{22}$%
\begin{equation*}
\left\vert III_{22}\right\vert =o(1),\text{ \ }as\text{ }k\rightarrow \infty
\end{equation*}%
uniformly with respect to $x\in G_{m}.$

Finally for $III$ we obtain 
\begin{equation}
\left\vert III\right\vert =o(1),\text{ \ }as\text{ }k\rightarrow \infty
\label{3}
\end{equation}%
uniformly with respect to $x\in G_{m}.$

The estimation of $II$ is analogous to the estimation of $III$ and we can
conclude 
\begin{equation}
\left\vert II\right\vert =o(1),\text{ \ }as\text{ }k\rightarrow \infty
\label{II}
\end{equation}%
uniformly with respect to $x\in G_{m}.$

Using lemma \ref{tsitsi} we obtain%
\begin{equation}
I=o\left( 1\right) ,\text{ \ }as\text{ }k\rightarrow \infty  \label{I}
\end{equation}%
uniformly with respect to $x\in G_{m}.$

Combining (\ref{IV}), (\ref{3}), (\ref{II}) and (\ref{I}) we complete the
proof of theorem \ref{theorem1}.
\end{proof}

\begin{proof}
of theorem \ref{theorem2}. \ It is clear that 
\begin{eqnarray*}
&&\sum\limits_{\beta =1}^{M_{k}-1}\frac{1}{\beta ^{1-\alpha }}\left\vert
f\left( x-Z_{\beta }^{\left( k\right) }\right) -f\left( x-Z_{\beta }^{\left(
k\right) }-e_{k}\right) \right\vert \\
&=&\sum\limits_{r=0}^{k-1}\sum\limits_{\beta =M_{r}}^{M_{r+1}-1}\frac{1}{%
\beta ^{1-\alpha }}\left\vert f\left( x-Z_{\beta }^{\left( k\right) }\right)
-f\left( x-Z_{\beta }^{\left( k\right) }-e_{k}\right) \right\vert \\
&\leq &\sum\limits_{r=0}^{k-1}\frac{1}{M_{r}^{1-\alpha }}\sum\limits_{\beta
=M_{r}}^{M_{r+1}-1}\left\vert f\left( x-Z_{\beta }^{\left( k\right) }\right)
-f\left( x-Z_{\beta }^{\left( k\right) }-e_{k}\right) \right\vert \\
&=&\sum\limits_{r=0}^{\gamma \left( k\right) }\frac{1}{M_{r}^{1-\alpha }}%
\sum\limits_{\beta =M_{r}}^{M_{r+1}-1}\left\vert f\left( x-Z_{\beta
}^{\left( k\right) }\right) -f\left( x-Z_{\beta }^{\left( k\right)
}-e_{k}\right) \right\vert \\
&&+\sum\limits_{r=\gamma \left( k\right) }^{k-1}\frac{1}{M_{r}^{1-\alpha }}%
\sum\limits_{\beta =M_{r}}^{M_{r+1}-1}\left\vert f\left( x-Z_{\beta
}^{\left( k\right) }\right) -f\left( x-Z_{\beta }^{\left( k\right)
}-e_{k}\right) \right\vert \\
&\leq &\omega \left( f,\frac{1}{M_{k}}\right) \sum\limits_{r=0}^{\gamma
\left( k\right) }M_{r}^{\alpha }+\sum\limits_{r=\gamma \left( k\right)
}^{k-1}\frac{1}{M_{r}^{1-\alpha }}\upsilon \left( M_{r},f\right) \\
&\leq &\omega \left( f,\frac{1}{M_{k}}\right) M_{\gamma \left( k\right)
}^{\alpha }+\sum\limits_{r=\gamma \left( k\right) }^{k-1}\frac{1}{%
M_{r}^{1-\alpha }}\upsilon \left( M_{r},f\right)
\end{eqnarray*}%
It is not hard to see that there exists $\left\{ \gamma \left( k\right)
:k\geq 1\right\} $ sequence for which $\gamma \left( k\right) \rightarrow
\infty ,$ as $k\rightarrow \infty $ and \ $\omega \left( f,\frac{1}{M_{k}}%
\right) M_{\gamma \left( k\right) }^{\alpha }\rightarrow 0.$ Hence, from the
condition of theorem \ we conclude that $\sum\limits_{r=\gamma \left(
k\right) }^{k-1}\frac{1}{M_{r}^{1-\alpha }}\upsilon \left( M_{r},f\right)
\rightarrow 0$, as $k\rightarrow \infty .$

Theorem \ref{theorem2} is proved.
\end{proof}

\begin{proof}
of corollary \ref{shedegi1}. \ 

Set 
\begin{equation*}
\sup_{k}\sum\limits_{\beta =1}^{M_{k}-1}M\left( \omega \left(
f,I_{k}+Z_{\beta }^{\left( k\right) }\right) \right) \equiv O_{M}.
\end{equation*}%
Let $O_{M}<1$. Using Jensen's inequality we have%
\begin{eqnarray*}
&&M\left( \frac{1}{M_{k}}\sum\limits_{\beta =1}^{M_{k}-1}\omega \left(
f,I_{k}+Z_{\beta }^{\left( k\right) }\right) \right) \\
&\leq &\frac{1}{M_{k}}\sum\limits_{\beta =1}^{M_{k}-1}M\left( \omega \left(
f,I_{k}+Z_{\beta }^{\left( k\right) }\right) \right) \leq \frac{1}{M_{k}}%
O_{M}<\frac{1}{M_{k}}.
\end{eqnarray*}%
Consequantly, 
\begin{equation*}
\upsilon \left( M_{k},f\right) \leq M_{k}M^{-1}\left( \frac{1}{M_{k}}\right)
.
\end{equation*}%
and 
\begin{equation*}
\sum\limits_{k=1}^{\infty }\frac{\upsilon \left( M_{k},f\right) }{%
M_{k}^{1-\alpha }}\leq \sum\limits_{k=1}^{\infty }M_{k}^{\alpha
}M^{-1}\left( \frac{1}{M_{k}}\right) <\infty .
\end{equation*}%
Now, let $O_{M}>1$. Because function $M$ is convex, we get%
\begin{equation*}
M\left( \frac{\upsilon \left( M_{k},f\right) }{M_{k}\cdot O_{M}}\right) \leq 
\frac{1}{O_{M}}M\left( \frac{\upsilon \left( M_{k},f\right) }{M_{k}}\right)
\leq \frac{1}{O_{M}}\frac{1}{M_{k}}O_{M}=\frac{1}{M_{k}}.
\end{equation*}%
Hence, 
\begin{equation*}
\frac{\upsilon \left( M_{k},f\right) }{M_{k}}\leq O_{M}M^{-1}\left( \frac{1}{%
M_{k}}\right)
\end{equation*}%
and%
\begin{equation*}
\sum\limits_{k=1}^{\infty }\frac{\upsilon \left( M_{k},f\right) }{%
M_{k}^{1-\alpha }}\leq \sum\limits_{k=1}^{\infty }M_{k}^{\alpha
}M^{-1}\left( \frac{1}{M_{k}}\right) <\infty
\end{equation*}

From theorem (\ref{theorem2}) we obtain proof of corollary \ref{shedegi1}.
\end{proof}

\begin{proof}
of \ corollary \ref{shedegi2}. \ Let $M\left( u\right) =u^{p}$. Then%
\begin{equation*}
\sum\limits_{k=1}^{\infty }M_{k}^{\alpha }M^{-1}\left( \frac{1}{M_{k}}%
\right) =\sum\limits_{k=1}^{\infty }\frac{1}{M_{k}^{\alpha -\frac{1}{p}}}%
<\infty ,
\end{equation*}

when $\frac{1}{p}-\alpha >0.$

corollary \ref{shedegi2} is proved.
\end{proof}

\bigskip

\end{document}